\let\origsection=\section \def\section{\@ifstar{\origsection*}{\mysection}} 
\def\mysection{\@startsection{section}{1}\z@{.7\linespacing\@plus\linespacing}{.5\linespacing}{\normalfont\scshape\centering\S}}
\renewcommand{\PrintDOI}[1]{\doi{#1}}
\numberwithin{equation}{section}
\let\polishlcross=\l
\def\l{\ifmmode\ell\else\polishlcross\fi}
\let\emptyset=\varnothing
\let\setminus=\smallsetminus
\def\moverlay{\mathpalette\mov@rlay}
\def\mov@rlay#1#2{\leavevmode\vtop{   \baselineskip\z@skip \lineskiplimit-\maxdimen
   \ialign{\hfil$\m@th#1##$\hfil\cr#2\crcr}}}
\newcommand{\charfusion}[3][\mathord]{
    #1{\ifx#1\mathop\vphantom{#2}\fi
        \mathpalette\mov@rlay{#2\cr#3}
      }
    \ifx#1\mathop\expandafter\displaylimits\fi}
\DeclareFontFamily{U}  {MnSymbolC}{}
\DeclareSymbolFont{MnSyC}         {U}  {MnSymbolC}{m}{n}
\DeclareFontShape{U}{MnSymbolC}{m}{n}{
    <-6>  MnSymbolC5
   <6-7>  MnSymbolC6
   <7-8>  MnSymbolC7
   <8-9>  MnSymbolC8
   <9-10> MnSymbolC9
  <10-12> MnSymbolC10
  <12->   MnSymbolC12}{}
\DeclareMathSymbol{\powerset}{\mathord}{MnSyC}{180}
\def\namedlabel#1#2{\begingroup
    #2%
    \def\@currentlabel{#2}%
    \phantomsection\label{#1}\endgroup
}
\theoremstyle{plain}
\newtheorem{theorem}{Theorem}[section]
\newtheorem{fact}[theorem]{Fact}
\newtheorem{prop}[theorem]{Proposition}
\newtheorem{lemma}[theorem]{Lemma}
\newtheorem{claim}[theorem]{Claim}
\newtheorem{conj}[theorem]{Conjecture}
\theoremstyle{definition}
\newtheorem{definition}[theorem]{Definition}
\newtheorem{quest}[theorem]{Question}
\theoremstyle{remark}
\newtheorem{note}[theorem]{Note}
\let\theta=\vartheta
\let\rho=\varrho
\let\phi=\varphi
\newcommand{\cC}{\mathcal{C}}
\newcommand{\cE}{\mathcal{E}}
\newcommand{\cP}{\mathcal{P}}
\newcommand{\tS}{\tilde{S}}
\begin{document}

\title{Embedding Hypertrees into Steiner Triple Systems}

\author[Bradley Elliott]{Bradley Elliott}
\address{Department of Mathematics and Computer Science, 
Emory University, Atlanta, GA 30322, USA}
\email{bradley.elliott@emory.edu}

\author[Vojt\v{e}ch R\"{o}dl]{Vojt\v{e}ch R\"{o}dl}
\address{Department of Mathematics and Computer Science, 
Emory University, Atlanta, GA 30322, USA}
\email{rodl@mathcs.emory.edu}
\thanks{The second author was supported by NSF grant DMS 1764385.}

\dedicatory{ To Charlie Colbourn and Alex Rosa on the occasion of their round birthdays}

\begin{abstract}
In this paper we are interested in the following question:  Given an arbitrary Steiner triple system $S$ on $m$ vertices and any 3-uniform hypertree $T$ on $n$ vertices, is it necessary that $S$ contains $T$ as a subgraph provided $m \geq (1+\mu)n$?
We show the answer is positive for a class of hypertrees and conjecture that the answer is always positive.
\end{abstract}

\keywords{hypergraph, tree, Steiner Triple System}

\maketitle

\section{Introduction}
The well-known Tree Packing Conjecture of Gy\'arf\'as and Lehel (\cite{GL}) states that any arbitrary set of trees $T_2,T_3, \cdots, T_{m}$ where tree $T_i$ has order $i$ can be packed into the complete graph $K_m$.
This conjecture remains open despite many partial results since its statement in 1976.
One such result by Bollob\'as (\cite{Bol}) shows that any arbitrary set of trees $T_2, T_3, \cdots, T_s$ can be packing into $K_m$ when $3\leq s < \frac{m}{\sqrt{2}}$.

Inspired by this conjecture, Peter Frankl (personal communication) asked a similar question regarding hypertrees and Steiner triple systems.
A \emph{hypertree} is a connected, simple 3-uniform hypergraph in which every two vertices are joined by a unique path.
Note that any hypertree must have odd order, and in particular any hypertree with size $s$ has order $2s+1$.
A Steiner triple system is a 3-uniform hypergraph in which every pair of vertices is contained in exactly one edge.

\begin{quest}[Frankl]\label{q:Frankl}
What is the largest value of $s$ so that any $s$ hypertrees $T_3, T_5, T_7, \cdots, T_{2s+1}$ can be packed into any Steiner triple system $S$ on $m$ vertices?
\end{quest}

Clearly $s \leq \frac{m-1}{2}$, since no hypertree can have order greater than that of $S$.
A greedy argument easily yields that any hypertree with at most $\frac{m+1}{4}$ edges (and thus $\frac{m+3}{2}$ vertices) can be embedded into any $S$.
Following from this, Frankl showed using a method similar to \cite{Bol} that if $s= \frac{m+1}{4}$, then any $s$ hypertrees $T_3, T_5, T_7, \cdots, T_{2s+1}$ can be packed into any $S$ on $m$ vertices.
It is however less clear how to embed larger trees.

This paper then is motivated by the following question:
\begin{quest}\label{q:embed}
Given a Steiner triple system $S$ on $m$ vertices and a hypertree $T$ on $2s+1$ vertices with $2s+1 < m$ vertices, can one find $T$ in $S$ as a subhypergraph?
\end{quest}

For convenience let $n=2s+1$ be the order of $T$.
One can find examples of pairs $T$ and $S$ showing that if $n$ equals $m$ then the answer is negative.
For example, for $s\geq 3$ let $T$ be the hypertree with edges $\{u, v_i, w_i \}$ for $1 \leq i \leq s-1$ and also the edge $\{w_1, x, y \}$.
One can easily observe that $T$ is not in any Steiner triple system $S$ on $2s+1$ vertices.
In fact even assuming only that $n < m$ may likely not be sufficient to guarantee the embedding of any hypertree on $n$ vertices into any Steiner triple system with $m$ vertices.
Consequently we start with the following more modest conjecture.
\begin{conj}\label{conjec}
Given $\mu > 0$, there exists $n_0 = n_0(\mu)$ such that if $n > n_0$, $T$ is any hypertree on $n$ vertices, and $S$ is any Steiner triple system on $m \geq n(1+\mu)$ vertices, then $T$ is a subhypergraph of $S$.
\end{conj}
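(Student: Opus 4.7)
The plan is to combine the absorbing method with a random greedy embedding, using the $\mu n$ extra vertices of $S$ as crucial room to maneuver. A pure greedy BFS embedding fails once $\mu < 1$: when extending by the $i$-th edge, the vertex $\phi(u_i) \in V(S)$ has exactly $(m-1)/2$ incident edges, and since $S$ is a Steiner triple system each of the $2i-2$ other used vertices blocks exactly one of them, leaving $(m-1)/2 - (2i-2)$ available edges, which becomes negative near $i=s$ as soon as $m<2n$. So the tail of the embedding is where essentially all difficulty is concentrated, and absorption is needed to finish it.

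First I would set aside a random \emph{reservoir} $R \subseteq V(S)$ of size roughly $\mu n/3$. By a standard Chernoff computation on a uniformly random subset, one can guarantee that every vertex $v \in V(S)$ is incident to $(1\pm o(1)) \frac{|R|^2}{m-1}$ edges entirely inside $\{v\} \cup R$, and also that the same quasi-random estimate holds when one removes any set of at most $\sqrt{n}$ ``forbidden'' vertices. In parallel, distinguish a small subfamily $T^* \subseteq T$ of edges to be absorbed last: I would choose $T^*$ to consist of many leaf edges of $T$ (edges containing two vertices of degree $1$), of which any hypertree on $n$ vertices contains $\Omega(n)$. Each such leaf edge is attached to the rest of $T$ through a single root vertex, so embedding it requires only that $\phi(\text{root})$ have one free triple in $R$.

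Next, root $T$ at a centroid and process the edges of $T \setminus T^*$ in BFS order. At each step we extend by an edge $\{u,v,w\}$ with $\phi(u)$ already chosen; we pick $\{\phi(u), v', w'\}$ uniformly at random among all edges of $S$ through $\phi(u)$ with $v', w' \notin R$ and unused. Provided $|T \setminus T^*| \leq m - |R| - o(n)$ the count of available edges stays $\Omega(\mu n)$ throughout, so the algorithm succeeds; a martingale/concentration argument (Azuma, or the differential equation method applied edgewise) shows that no vertex becomes ``locally saturated'' during this phase. Finally, embed the leaf edges of $T^*$ one by one using $R$: for each leaf edge with already-embedded root $\phi(u)$, pick any unused triple through $\phi(u)$ with its two other vertices in $R$; the reservoir property ensures $\Omega(\mu^2 n)$ such triples remain at each step, which easily covers the $O(n)$ leaves to be placed.

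The principal obstacle is the concentration step in the middle phase: because the host is a \emph{specific} STS rather than a random or pseudo-random hypergraph, one cannot appeal to counting lemmas in the usual way, and one must control the codegrees between the growing image of $T$ and the reservoir $R$ using only the defining identity that each pair of vertices lies in exactly one triple. A secondary obstacle is choosing $T^*$ in hypertrees that contain very few leaf edges (e.g., ``tight path''-like structures or trees with a single vertex of degree $\Omega(n)$); in those cases one must enlarge the absorbing family $T^*$ to include short appendages other than leaf edges, and design correspondingly richer absorber gadgets in $R$, which is where the proof will genuinely need structural input from the STS axioms beyond the quasi-random estimates above.
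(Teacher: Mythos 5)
This statement is presented in the paper as a \emph{conjecture}, which the authors explicitly declare they are unable to resolve; the paper only proves it for the special class of subdivision trees of bounded degree (Theorem~\ref{thm:main}). Your proposal, too, is a program rather than a proof, and you are honest about that: you flag the concentration step and the scarcity of leaf edges in some hypertrees as unresolved obstacles. So the first thing to say is that the target here is an open problem, and neither the paper nor your sketch closes it.

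Beyond the acknowledged obstacles, there is a concrete gap in the middle phase of your scheme. Setting aside a reservoir $R$ of size $\approx \mu n/3$ does not rescue the greedy BFS of $T \setminus T^*$: the available-edge count at the vertex being extended is still, as in your own opening computation, about $(m-1)/2 - 2i - |R|$ (used vertices still each kill one triple, and reservoir vertices are off limits). If $T \setminus T^*$ still contains $\Omega(n)$ edges, this drops to zero well before the greedy phase completes for small $\mu$, regardless of $R$. For the greedy phase to stay $\Omega(\mu n)$ you would need $|V(T\setminus T^*)|$ to be bounded away from $m/2$, i.e., $T^*$ must absorb a \emph{constant fraction} of the tree; but most hypertrees do not have that many leaf edges, and this is precisely the secondary obstacle you identify, promoted to the central one. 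In short, absorption helps only with the ``tail,'' while the real obstruction in an arbitrary Steiner triple system with $m=(1+\mu)n$ occurs throughout the second half of the embedding, not just at the end.

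For comparison, the paper's special-case argument dodges the greedy obstruction entirely: it decomposes $T$ into a family $\cP$ of subhypertrees of \emph{bounded} size $k$ connected by stars (Lemma~\ref{lemma:sawing}), then embeds an almost-perfect packing of a small ``representative forest'' $F$ into $\tS = S[V\setminus R]$ by building an auxiliary $r$-uniform hypergraph and invoking the Frankl--R\"odl/Pippenger--Spencer matching theorem (Claim~\ref{claim:forest}, Theorem~\ref{thm:FR}), and only then reattaches the excised stars through the reservoir (Lemma~\ref{lemma:stars}, Lemma~\ref{lemma:random}). Because each piece has size $O(1)$ in $n$, one never greedily grows a long component and never runs into the saturation count. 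That is the structural leverage that lets the paper win for subdivision trees of bounded degree, and it is also why the paper cannot (yet) handle arbitrary hypertrees: the decomposition into stars exploits that every edge of a subdivision tree has a pendant vertex. Your absorbing framework might ultimately be the right shape of idea for the general conjecture, but as written it fails on the quantitative bookkeeping during the greedy phase and on the structural assumption about leaf edges, both of which you would need to replace with new ideas before it could become a proof.
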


Note that if hypertree T is replaced by a matching the analogous result is true -- in other words, every Steiner triple system contains an almost perfect matching.
Various generalizations of this fact are known and we mention some in Section 5.

Unfortunately we are unable to resolve even this Conjecture~\ref{conjec} and will address a specific case only.  Here we consider a special class of trees.

\begin{definition}\label{def:subdiv}
A \emph{subdivision tree} $T$ is a hypertree in which each edge contains a vertex of degree one.
\end{definition}

Equivalently, $T$ can be obtained from a graph tree $T'$ by subdividing each edge $\{x,y\}$ of $T'$ by a vertex $z_{xy}$ and setting
$$V(T) = V(T') \cup \{z_{xy}, \{x,y\} \in E(T')\}$$
$$E(T) = \{ \{ x,y,z_{xy}\}, \{x,y\} \in E(T') \}.$$

We say that a hypertree $T$ has bounded degree $d$ if no vertex of $T$ has degree greater than $d$.

\begin{theorem}[Main Theorem]\label{thm:main}
For any $d\in \mathbb{Z}^+$ and any $\mu > 0$ such that $\frac{1}{d} \gg \mu$, there exists $n_0=n_0(d,\mu)$ such that for all odd $n>n_0$, any subdivistion tree $T$ on $n$ vertices with bounded degree $d$ is a subhypergraph of any Steiner triple system $S$ on $m\geq(1+\mu)n$ vertices.
\end{theorem}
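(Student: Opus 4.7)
\noindent\emph{Proof plan.} The plan is to reformulate the problem as a rainbow embedding in a proper edge-coloring of $K_m$, then run a random-greedy embedding on a random partition of $V(S)$ and finish with an absorber. For each pair $\{u,v\}\subseteq V(S)$ write $c(uv)$ for the unique third vertex of the triple of $S$ through $\{u,v\}$; because the link of each vertex of $S$ is a perfect matching, $c$ is a proper edge-coloring of $K_{V(S)}$ in which each of the $m$ colors is used exactly $(m-1)/2$ times. Setting $k:=(n+1)/2$ (so that the underlying graph tree $T'$ has $k$ vertices and $k-1$ edges), embedding $T$ into $S$ is equivalent to finding an injection $\phi\colon V(T')\hookrightarrow V(S)$ whose induced edges $\{\phi(x)\phi(y):xy\in E(T')\}$ receive $k-1$ pairwise distinct $c$-colors, none of which lies in $\phi(V(T'))$.

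\noindent\emph{Partition and bulk embedding.} Choose $\delta$ with $\mu^{2}\ll\delta\ll\mu$ and partition $V(S)=V_B\sqcup V_Z$ at random so that $|V_B|=(1+\delta)k$; since $m\geq(1+\mu)n$, the density $p=|V_B|/m$ satisfies $p\leq\tfrac12-\Omega(\mu)$. Define the auxiliary graph $H$ on $V_B$ by declaring $uv\in E(H)$ exactly when $c(uv)\in V_Z$. Concentration applied to the link-matching of each vertex shows that with positive probability $H$ has minimum degree at least $(\tfrac12+\Omega(\mu))|V_B|$ and also satisfies the pseudorandom bound $|\{u\in V_B:c(uv)\in W\}|\leq(1+\mu^{2})\,p\,|W|$ for every $v\in V_B$ and every $W\subseteq V_Z$ with $|W|\leq n$. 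Next, root $T'$ arbitrarily and process $V(T')$ in BFS order: when placing a new vertex $y$ adjacent to its already-embedded parent $x$, choose $\phi(y)$ uniformly at random among $w\in V_B$ satisfying $w\notin\phi(V_0)$ and $c(\phi(x)w)\in V_Z\setminus Z_0$, where $V_0$ and $Z_0$ denote the previously embedded branch vertices and colors used so far. Combining the minimum-degree condition with the pseudorandom bound yields $\Omega(\mu|V_B|)$ admissible choices so long as $|V_0|\leq(1-\mu/2)k$, so the bulk of $T'$ embeds successfully.

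\noindent\emph{Absorbing the tail; main obstacle.} The last $O(\mu k)$ vertices of $T'$ form $O(\mu k)$ small bounded-degree subtrees hanging off the bulk embedding, and these must be completed inside a small random absorbing set $A\subseteq V_B$ of size $\Theta(\mu k)$ reserved at the outset. Constructing $A$ is the main obstacle of the proof: it must simultaneously (i) be compatible with the rainbow constraint against all colors already spent by the bulk greedy, (ii) contain enough internal STS-triples and pairs to complete any leftover bounded-degree subtree, and (iii) interface correctly with the already-embedded portion of $T'$ at the predetermined attachment points. The hypothesis $1/d\gg\mu$ is essential here: it caps the complexity of each leftover subtree and lets the pseudorandomness of $c$ restricted to $A$ provide enough local configurations inside $A$ to absorb any bounded-degree continuation of the partial embedding.
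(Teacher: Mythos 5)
Your reformulation to a rainbow‐embedding problem in the edge‐coloring $c$ of $K_m$ defined by the STS is a genuinely different and attractive framing, and it is not the route taken in the paper. Unfortunately the bulk embedding step does not survive the arithmetic, and the absorber is asserted rather than constructed, so the proposal has two real gaps.

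\textbf{The greedy runs out of room.} With $|V_B|=(1+\delta)k$ and $p=|V_B|/m\leq\tfrac12-\Omega(\mu)$, the minimum degree bound you quote gives, for $x$ already embedded, at least $(\tfrac12+\Omega(\mu))|V_B|$ vertices $w\in V_B$ with $c(\phi(x)w)\in V_Z$. From these you must exclude (i) the $|\phi(V_0)|=|V_0|$ already-used images, and (ii) those $w$ with $c(\phi(x)w)\in Z_0$, which by your pseudorandom bound is up to $(1+\mu^2)p\,|Z_0|$. Since $|Z_0|=|V_0|-1$, the number of admissible $w$ is at least
\[
\Bigl(\tfrac12+\Omega(\mu)\Bigr)(1+\delta)k \;-\; (1+\mu^2)\,p\,|V_0| \;-\; |V_0|.
\]
Taking $|V_0|=(1-\mu/2)k$, $p\approx\tfrac12(1+\delta-\mu)$, and $\delta\ll\mu\ll1$, the leading terms are $\tfrac12 k - \tfrac12 k - k + O(\mu k) = -k+O(\mu k)$, which is \emph{negative}. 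In fact the count becomes nonpositive already at $|V_0|\approx k/3$, not at $(1-\mu/2)k$. The difficulty is structural: each step of the rainbow greedy consumes one slot in $V_B$ and one slot in $V_Z$, and $|V_B|+|V_Z|=m\approx 2(1+\mu)k$ while $\deg_H\leq p(1-p)m\leq m/4$, so the available degree is overwhelmed by the $\approx 2j$ exclusions once $j$ is a constant fraction of $k$. Resizing $V_B$ cannot help, because $p(1-p)\leq 1/4$ for any $p$. A greedy embedding is simply the wrong tool when the target packing is nearly perfect in both the vertex set and the color set simultaneously.

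\textbf{The absorber is the whole proof, and it is missing.} You flag constructing $A$ as ``the main obstacle'' and list three properties it must have, but you do not construct it, bound its size against the colors spent, or explain how a single small random set can simultaneously complete an arbitrary bounded-degree forest at $O(\mu k)$ predetermined attachment points under rainbow constraints. The remark that $1/d\gg\mu$ ``lets the pseudorandomness of $c$ restricted to $A$ provide enough local configurations'' is a statement of intent, not an argument.

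For comparison, the paper avoids the greedy/saturation problem entirely. It first decomposes $T$ (Lemma~\ref{lemma:sawing}) into many \emph{small} subtrees of size at most $k$, plus a sparse set of connecting stars and isolated vertices; then, after putting aside a random reservoir $R$ (Lemma~\ref{lemma:random}), it packs vertex-disjoint copies of a fixed ``sampled'' forest $F$ into $\tS=S[V\setminus R]$ via a nearly perfect hypergraph matching theorem of Frankl--R\"odl/Pippenger--Spencer type (Theorem~\ref{thm:FR} applied through Claims~\ref{claim:forest}~and~\ref{claim:degcodeg}), and finally reattaches the stars using Lemma~\ref{lemma:stars} inside $R$. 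The nearly perfect matching theorem is exactly the device that allows the packing to use almost all of $V(\tS)$, which your greedy cannot do. If you want to pursue the rainbow reformulation, you would need to replace the greedy with a comparable nearly-perfect-packing argument (or a genuine absorbing structure with explicit size bookkeeping) rather than a step-by-step BFS.
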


We will divide the proof into four parts. 
First, we will decompose the hypertree $T$ into smaller subhypertrees by removing some edges in the shape of stars and some isolated vertices.
The vast majority of $V(T)$ will be contained in the subhypertrees after the decomposition.
We will keep track of which stars we remove while decomposing $T$ so that we can restore them later.
Second, we will show that given a set of at most $d$ vertices in the Steiner triple system $S$, we can find many stars in $S$ that all contain the vertices but that are otherwise pairwise disjoint.
These stars in $S$ are the candidates for where to eventually embed the stars we removed from $T$ in part 1.
Third, we will fix a subset of the vertices of $S$, called the reservoir.
The reservoir is where the isolated vertices from part one will eventually be embedded.
Lastly, we will embed the subhypertrees from part one into the Steiner triple system $S$, though we will avoid using the reservoir.
Then we will use the reservoir to embed the isolated vertices and stars removed in part one.

The constants used in the proof of Theorem~\ref{thm:main} follow the following hierarchy.
\begin{equation}\label{eq:hier}
1 > \frac{1}{d} \gg \mu \gg \epsilon \gg \frac{1}{k} \gg \frac{1}{t} > \frac{1}{k3^k} \gg \frac{1}{l} \gg \frac{1}{n} > \frac{1}{m},
\end{equation}
where $d$, $\mu$, $n$, and $m$ are as stated in the theorem and the others are defined when needed.
The reader may think about the constants $d$, $\mu$, $\epsilon$, $k$, and $t$ as being fixed while $l$, $n$, and $m$ are tending together to infinity.

\section{Decomposing the Hypertree}

We will decompose $T$ into a set $\cP$ of subhypertrees because the smaller hypertrees will be simpler to embed into $S$.
In the proof of Theorem~\ref{thm:main} we will describe how the embedded subhypertrees can be reassembled to form a copy of $T$ in $S$.

We will need the following definition throughout this paper.

\begin{definition}\label{def:star}
A \emph{star} $S$ in a 3-uniform hypergraph $G$ is a set of edges $\{v_i,w_i,u\}\in E(G)$, $1\leq i\leq c = deg_G(u)$.
All vertices $v_i, w_i, u$, $1\leq i \leq c$ must be distinct, so that any two edges intersect precisely at $u$, which we call the \emph{center} of the star.
\end{definition}

The following lemma describes the result of the decomposition process.

\begin{lemma}\label{lemma:sawing}
Let $T$ be any subdivison tree on $n$ vertices with bounded degree $d\ll n$, and let $k$ be any integer with $d \ll k \ll n$.
Then there exists a system $\cE$ of $e$ stars $E_j = \{\{v_{j,i},w_{j,i},u_{j}\}, 1\leq i \leq deg(u_j)\} \subset E(T)$, for $j=1,\ldots,e$, such that by removing all of the edges of the stars from $T$, $T$ is decomposed into 
\begin{itemize}
\item a set $I$ of isolated vertices, and
\item a set $\cP$ of $l$ subhypertrees
\end{itemize}
with the following properties.

\begin{enumerate}
\item\label{sawing2} $k \geq \vert V(P) \vert \text{, for any } P \in \cP$.
\item\label{sawing1} $\left( \frac{2d^2}{k}\right) n \geq \vert I \vert$.
\item\label{sawing3} $\vert I \vert \geq \vert \cP \vert =l$
\item\label{sawing4} $ l \geq \vert \cE \vert = e $
\item\label{sawing5} $ l \geq \frac{n}{k + 3}$.
\item\label{sawing6} $I = \bigcup_{j=1}^e \{w_{j,1}, w_{j,2}, \ldots, w_{j,deg(u_j)}, u_j\}$, and $v_{j,i} \not \in I$ for all $i,j$.
\end{enumerate}
\end{lemma}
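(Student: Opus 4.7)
View $T$ as the subdivision of an underlying graph tree $T'$: we have $V(T) = V(T') \sqcup \{z_e : e \in E(T')\}$ with $|V(T')| = (n+1)/2$, every subdivision vertex $z_e$ has $\deg_T(z_e) = 1$, and every $v \in V(T')$ satisfies $\deg_T(v) = \deg_{T'}(v) \le d$. Fix a threshold $K = \Theta(k/d)$ chosen so that a rooted subtree of $T'$ on $< K$ vertices gives rise to a sub-hypergraph of $T$ with enough room (after the leaf reattachment below) to fit under the size bound $k$.

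\textbf{Algorithm.} First topologically prune $T'$, marking every leaf $y$ together with its incident subdivision vertex $z_{yy'}$ for later reattachment to the sub-hypertree containing the retained neighbour $y'$; let $T''$ be the resulting core. Root $T''$ and iterate: while $|V(T'')| > K$, pick the deepest $u \in V(T'')$ whose rooted subtree has $\ge K$ vertices but all of whose children have subtree size $< K$. Declare $u_j := u$ the centre of the $j$-th star $E_j := \{\{u_j, y, z_{u_j y}\} : y \in N_{T'}(u_j)\}$, label $v_{j,i} := y_i$ and $w_{j,i} := z_{u_j y_i}$, and remove $E_j$ from $T$. After the cut, $u_j$ and every $w_{j,i}$ are isolated and join $I$, while each $v_{j,i} \in V(T'')$ still has $T'$-degree $\ge 2$ and remains in a sub-hypertree. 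Delete $V(T''_{u_j})$ and iterate; when $|V(T'')| \le K$, record the remaining piece in $\cP$. Finally, reattach each pruned leaf $y$ and subdivision vertex $z_{yy'}$ to the sub-hypertree of $\cP$ containing $y'$.

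\textbf{Verifications.} Each $P \in \cP$ starts on $\le 2K - 1$ core vertices and gains at most $2(d-1)$ extra vertices per core vertex from reattachment, so $|V(P)| \le 2dK - 1 \le k$ for our choice of $K$; this is condition~(1). Writing $c_j$ for the number of children of $u_j$ in $T''$, each cut adds $c_j + 2$ vertices to $I$ (the centre plus $c_j + 1$ subdivision vertices) and $c_j$ pieces to $\cP$, so $|I| - l = 2e - 1 \ge 1$ and $l \ge e$: these are~(3) and~(4). Each cut uses $\ge K$ core vertices, so $e \le |V(T'')|/K \le n'/K = O(dn/k)$ and $|I| \le (d+1)e \le 2d^2 n/k$, proving~(2). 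From $c_j + 1 \ge s_j/K$ (where $s_j$ is $u_j$'s core-subtree size) and $\sum_j s_j \ge |V(T'')| - K$ we get $l \ge |V(T'')|/K - O(1) \ge n/(k+3)$ as long as pruning preserves $|V(T'')| = \Omega(n)$, which is~(5). The labeling built into the algorithm produces~(6) by construction.

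\textbf{Main obstacle.} The fragile point is condition~(6): the equality $I = \bigcup_j \{u_j, w_{j,1}, \ldots, w_{j,\deg(u_j)}\}$ together with $v_{j,i} \notin I$ forces every cut centre to have only non-leaf $T'$-neighbours, since otherwise some $v_{j,i} = y_i$ is also isolated by the cut and must land in $I$. Leaf pruning addresses the generic case, but in extremal subdivision trees (e.g.\ caterpillars, where every non-leaf vertex of $T'$ is adjacent to a leaf) the pruned core may be too small, and one must additionally insert stars centred at the subdivision vertices of the pendant edges and carefully orchestrate the labellings so that each spine vertex which ends up isolated is tagged as the $w$-vertex of exactly one of the surrounding stars while never being a $v$-vertex of another. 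Checking that this delicate interplay is compatible with the size and count bounds~(1)--(5) is where the generous constants $2d^2$ and $k+3$ of the statement leave the necessary slack, and is the main technical ingredient of the proof.
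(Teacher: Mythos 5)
You have correctly zeroed in on the one genuinely delicate point of the lemma — guaranteeing property~(\ref{sawing6}) that the $v_{j,i}$ never land in $I$ — and you are right that this is where the argument must be careful. But your proposal does not resolve it; you explicitly defer it as ``the main technical ingredient,'' and the leaf-pruning device you sketch is, as you yourself observe, insufficient for caterpillar-like subdivision trees. That is a real gap, not merely an unchecked detail.

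The paper's resolution rests on two simple devices that your proposal does not use. First, the paper allows \emph{single-vertex} subhypertrees as members of $\cP$ (see the Note immediately after the lemma and step~(d) of the sawing procedure). When the cut around $u_j$ isolates a neighbour $v_{j,i}$ that happens to have degree one in $T$, that lone vertex is nevertheless recorded in $\cP$, not in $I$; so the scenario you worried about (a leaf $y_i$ of $T'$ forced into $I$) never arises. This makes the pruning step unnecessary and removes the concern that $T''$ might become too small. Second, rather than ordering by ``deepest vertex with large subtree among small children,'' the paper fixes a proper $2$-colouring of $V(T)$: all celibate vertices (i.e.\ the chosen degree-one vertex in each edge) are blue, the root is red, and other colours alternate down the tree. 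The sawing step deliberately ensures that every centre $u_j$ is \emph{red}. Since each $v_{j,i}$ is adjacent to the red $u_j$, it is blue, and blue vertices can never be a later centre $u_h$ nor a celibate $w_{h,l}$ — hence $v_{j,i}\notin I$. This is what makes~(\ref{sawing6}) a one-line consequence of the construction rather than a delicate bookkeeping problem.

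There is also a smaller mismatch: the paper selects $u_j$ by walking the greedy maximum-progeny branch in the current remainder $H_j$ and taking the last vertex on it with more than $k$ vertices in its progeny (shifting by one step if that vertex is blue), not by a global ``deepest vertex'' rule on a pruned core. This particular choice both keeps $u_j$ red and supplies the lower bound $|V(H_j)\setminus V(H_{j+1})| > \frac{k-d}{d-1}$ used for property~(\ref{sawing1}). Your quantitative estimates for~(\ref{sawing2})--(\ref{sawing5}) are in the right spirit, but because they are built on top of the unresolved pruning/reattachment scheme, they would need to be reworked once you adopt a construction that actually secures~(\ref{sawing6}).
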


\begin{note}
Some of the subhypertrees in $\cP$ may contain just a single vertex, but for technical reasons they will still be considered as elements of $\cP$ and not of $I$.
\end{note}

Before we prove Lemma~\ref{lemma:sawing}, we introduce some terminology.
We fix some vertex of degree at least 2 to be the root of $T$.
We say two vertices are \emph{adjacent} if they belong to the same edge of $T$.
Define a \emph{leaf} on $T$ to be any degree-one vertex that is adjacent to another degree-one vertex.
Borrowing the terminology of a family tree, for a vertex $v$ of $T$, we say that the \emph{father} of $v$ is the neighbor of $v$ that lies on the path from $v$ to the root.
Likewise, we say that a \emph{son} of $v$ is any neighbor of $v$ whose path to the root passes through $v$.
Note that the root has no father and leaf vertices have no sons.
Define a \emph{branch} of $T$ to be a sequence $\{ b_h\}_{h=1}^L$ of vertices in $T$ where $b_1$ is the root, $b_{h+1}$ is a son of $b_h$, and $b_L$ is a leaf.
We say that the \emph{progeny} of a vertex $v$ is the set of all vertices whose paths to the root must pass through $v$.
That is, the progeny of $v$ is the set of all of $v$'s sons, their sons, their sons, etc.
We say $v$ is included in the set of its own progeny.
Lastly, we say that a vertex is \emph{celibate} if it is the only degree-one vertex in its edge.
If an edge has two degree-one vertices, we choose exactly one of them to call celibate.
In this way, every edge has exactly one celibate vertex.
During the decomposition, we will use the distinction between celibate and non-celibate vertices to decide whether a single vertex should be added to $\cP$ (as a subhypertree) or to $I$ (as an isolated vertex).

\begin{proof}[Proof of Lemma~\ref{lemma:sawing}]\label{proof:sawing}
Choose any vertex with degree at least two to be the root of $T$ and decide which vertices to call celibate.
Create empty sets $I$, $\cP$, and $\cE$, which will be used to store isolated vertices, subhypertrees, and stars (respectively) as $T$ is decomposed.

We assign a proper coloring to $V(T)$ in the following way.
Color all celibate vertices blue.
Color the root red.
For every remaining uncolored vertex, color it red if its father is blue, and blue if its father is red.
In this coloring, every edge has exactly one red and two blue vertices.
A necessary (but not sufficient) condition for a star to belong to $\cE$ will be that its center is red.
This ensures that the centers of two stars are never adjacent vertices, which will be important as we reassemble $T$ in $S$.

To construct $\cE$, and with it $\cP$ and $I$, repeat the following "sawing" procedure, each iteration of which will remove one or more stars,  subhypertrees, and isolated vertices from $T$.
What remains of $T$ at the beginning of the $j^{th}$ iteration is called $H_j$, where $H_1 = T$.
To simplify the notation throughout the proof, we will drop the index $j$ and will write $v_i = v_{j,i}$, $w_i = w_{j,i}$, and $u = u_j$, as well as $b_h = b_{j,h}$, whenever it is clear from the context that $j$ is fixed.

\begin{enumerate}
\item[(a)] Let $\{b_{h}\}$ be a branch of $H_j$, where $b_{h+1}$ is the son of $b_{h}$ that has the most vertices in its progeny.
Let $b_{x}$ be the last vertex in this sequence that has more than $k$ vertices in its progeny.
If $b_x$ is red, let $u = b_{x}$.
If $b_x$ is blue, let $u = b_{x+1}$.
In either case, $u$ is red and has at least $\frac{k-d}{d-1}$ vertices in its progeny.
We will "saw" around the vertex $u$.

\item[(b)] Let $E_j = \{\{v_{i},w_{i},u\}$, $1 \leq i \leq deg(u)\}$ be the star centered at $u$.
Label the vertices adjacent to $u$ such that $w_{i}$ is the celibate vertex in each edge and $v_{1}$ is the father of $u$.
Figure~\ref{beforesaw} shows how all of the vertices around $u$ should be labeled.
Add $E_j$ to $\cE$ and let $H_{j}' = H_j \setminus E_j$.
Figure~\ref{duringsaw1} shows as dotted triangles which edges are removed to form $H_{j}'$.

\item[(c)] Removal of $E_j$ from $H_j$ results in some vertices and subhypergraphs in $H_{j}'$ not being connected to the root, as shown in Figure~\ref{duringsaw2}.
Specifically, $u$ is now isolated, as are the celibate vertices $w_{i}$, $1 \leq i \leq deg(u)$ (because all celibate vertices of $T$ are originally contained in just one edge).
Add these vertices $u$ and $w_{i}$ to $I$.

\item[(d)] For $i\geq 2$, the vertices $v_{i}$ are not connected to the root in $H_j'$.
(Note that $v_{1}$ is the father of $u_j$, so there is still a path from $v_{1}$ to the root.)
Let $P_{j,i}$ be the connected component containing $v_{i}$ for $i \geq 2$, and add each $P_{j,i}$ (even if it is just a single vertex) to $\cP$.
\item[(e)] Define 
$$H_{j+1} = H_j' \setminus \{u\} \setminus \{w_{i}\}_{i=1}^{deg(u)} \setminus \{P_{j,i}\}_{i=2}^{deg(u)}.$$
That is, $H_{j+1}$ is the connected component of $H_j'$ that contains the root.
Figure~\ref{aftersaw} shows that after the isolated vertices and disconnected subhypergraphs are removed from $H_{j}'$, we are left with a smaller hypertree still containing $v_{1}$.
This smaller hypertree is $H_{j+1}$.
\end{enumerate}

There are two cases that can cause this procedure to end.
First, at some point the root could be the only vertex with more than $k$ vertices in its progeny, so the root becomes $u$.
By "sawing" around $u$ and removing all of the isolated vertices and components not connected to the root, we remove every vertex and edge.
This completes the decomposition.
In this case, notice that the number of complete iterations performed of the sawing procedure is $e = \vert \cE \vert$, since exactly one star is added to $\cE$ with each iteration.
Second, at some point there could be no vertex with more than $k$ vertices in its progeny.
If this occurs, add the entire remaining tree $H_j$ to $\cP$.
In this case, notice again that the number of complete iterations performed of the sawing procedure is $e = \vert \cE \vert$.
This implies the following fact.

\begin{fact}\label{prop1}
Either $H_{e+1}$ does not exist, or $H_{e+1}$ is a hypertree with at most $k$ vertices and is a member of $\cP$.
\end{fact}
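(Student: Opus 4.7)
The plan is to establish Fact~\ref{prop1} by a case analysis according to the two termination conditions of the sawing procedure just described. First, I would record a structural invariant, easily verified by induction on $j$: every $H_j$ produced by the procedure is a hypertree containing the root. Indeed, $H_1=T$ is a hypertree containing the chosen root, and $H_{j+1}$ is by definition (step (e)) the connected component of $H_j' = H_j \setminus E_j$ that contains the root; this is a connected subhypergraph of a hypertree, and since paths in the subhypergraph are also paths in $H_j$ they remain unique, so $H_{j+1}$ is itself a hypertree.

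In the first termination case, iteration $e$ is precisely the one in which the root becomes the only vertex with more than $k$ vertices in its progeny, so step (a) sets $u$ equal to the root. After sawing around $u$, every vertex of $H_e$ is accounted for: the root itself and each celibate neighbor $w_i$ are added to $I$ in step (c), while every other vertex—being some non-celibate neighbor $v_i$ or a descendant of such a $v_i$—ends up in one of the components $P_{e,i}$ placed into $\cP$ in step (d). Since step (e) retains only the component containing the root and the root has been deleted from $H_e'$, $H_{e+1}$ does not exist, which is the first alternative of the fact.

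In the second termination case, $H_{e+1}$ has been produced and the procedure halts because no vertex of $H_{e+1}$ has more than $k$ vertices in its progeny. In particular, the root of $H_{e+1}$ has at most $k$ vertices in its progeny, and since by the definition of progeny this set equals $V(H_{e+1})$, we conclude $|V(H_{e+1})| \le k$. The invariant from the first paragraph gives that $H_{e+1}$ is a hypertree, and the procedure explicitly inserts $H_{e+1}$ into $\cP$ in this case, yielding the second alternative. I do not anticipate significant obstacles: the fact is essentially bookkeeping, unpacking the two stopping rules together with the induction that each $H_j$ inherits the hypertree property from $T$. The only mildly delicate point is the first case, where one must verify that nothing survives into $H_{e+1}$ after the root is sawed; this is immediate because every non-root vertex of $H_e$ lies in the progeny of some neighbor of the root and is therefore removed in steps (c)–(d).
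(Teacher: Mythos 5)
Your proposal is correct and follows essentially the same argument the paper gives implicitly just before stating the fact: a case analysis on the two stopping rules, with the observation that in the second case the root's progeny is all of $V(H_{e+1})$, so $\vert V(H_{e+1})\vert \le k$, and that each $H_j$ inherits the hypertree property. The extra inductive invariant you record is a harmless elaboration of what the paper leaves implicit.
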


\begin{figure}[h]
    	\begin{minipage}{0.46\textwidth}
    		\centering
\definecolor{zzttqq}{rgb}{0,0,0} 
\definecolor{qqqqff}{rgb}{0,0,0} 
\begin{tikzpicture}[line cap=round,line join=round,>=triangle 45,x=1.0cm,y=0.7cm]
\fill[line width=2.pt,color=zzttqq,fill=zzttqq,fill opacity=0.10000000149011612] (6.,1.) -- (4.2,-1.) -- (4.8,-1.) -- cycle;
\fill[line width=2.pt,color=zzttqq,fill=zzttqq,fill opacity=0.10000000149011612] (6.,1.) -- (5.7,-1.) -- (6.3,-1.) -- cycle;
\fill[line width=2.pt,color=zzttqq,fill=zzttqq,fill opacity=0.10000000149011612] (6.,1.) -- (7.2,-1.) -- (7.8,-1.) -- cycle;
\fill[line width=2.pt,color=zzttqq,fill=zzttqq,fill opacity=0.10000000149011612] (6.,1.) -- (5.4,1.) -- (4.1,3.) -- cycle;
\fill[line width=2.pt,color=zzttqq,fill=zzttqq,fill opacity=0.10000000149011612] (4.1,3.) -- (4.3641429644735465,1.0254659940610364) -- (3.806023398089517,1.0254659940610364) -- cycle;
\fill[line width=2.pt,color=zzttqq,fill=zzttqq,fill opacity=0.10000000149011612] (4.1,3.) -- (2.832891846445569,1.0254659940610364) -- (2.3168531886162764,1.0216671244288105) -- cycle;
\fill[line width=2.pt,color=zzttqq,fill=zzttqq,fill opacity=0.10000000149011612] (4.1,3.) -- (3.509291958589649,3.0008696189206994) -- (3.8,5.) -- cycle;
\fill[line width=2.pt,color=zzttqq,fill=zzttqq,fill opacity=0.10000000149011612] (4.8,-1.) -- (3.8657942712621005,-2.9859105973373743) -- (4.41898751506418,-2.998203780532976) -- cycle;
\fill[line width=2.pt,color=zzttqq,fill=zzttqq,fill opacity=0.10000000149011612] (4.8,-1.) -- (5.,-3.) -- (5.6114262850375525,-3.010496963728578) -- cycle;
\fill[line width=2.pt,color=zzttqq,fill=zzttqq,fill opacity=0.10000000149011612] (7.8,-1.) -- (7.,-3.) -- (7.60292196272504,-2.998203780532976) -- cycle;
\fill[line width=2.pt,color=zzttqq,fill=zzttqq,fill opacity=0.10000000149011612] (7.8,-1.) -- (8.15611520652712,-3.010496963728578) -- (8.758481183111607,-3.010496963728578) -- cycle;
\draw [line width=2.pt,color=zzttqq] (6.,1.)-- (4.2,-1.);
\draw [line width=2.pt,color=zzttqq] (4.2,-1.)-- (4.8,-1.);
\draw [line width=2.pt,color=zzttqq] (4.8,-1.)-- (6.,1.);
\draw [line width=2.pt,color=zzttqq] (6.,1.)-- (5.7,-1.);
\draw [line width=2.pt,color=zzttqq] (5.7,-1.)-- (6.3,-1.);
\draw [line width=2.pt,color=zzttqq] (6.3,-1.)-- (6.,1.);
\draw [line width=2.pt,color=zzttqq] (6.,1.)-- (7.2,-1.);
\draw [line width=2.pt,color=zzttqq] (7.2,-1.)-- (7.8,-1.);
\draw [line width=2.pt,color=zzttqq] (7.8,-1.)-- (6.,1.);
\draw [line width=2.pt,color=zzttqq] (6.,1.)-- (5.4,1.);
\draw [line width=2.pt,color=zzttqq] (5.4,1.)-- (4.1,3.);
\draw [line width=2.pt,color=zzttqq] (4.1,3.)-- (6.,1.);
\draw [line width=2.pt,color=zzttqq] (4.1,3.)-- (4.3641429644735465,1.0254659940610364);
\draw [line width=2.pt,color=zzttqq] (4.3641429644735465,1.0254659940610364)-- (3.806023398089517,1.0254659940610364);
\draw [line width=2.pt,color=zzttqq] (3.806023398089517,1.0254659940610364)-- (4.1,3.);
\draw [line width=2.pt,color=zzttqq] (4.1,3.)-- (2.832891846445569,1.0254659940610364);
\draw [line width=2.pt,color=zzttqq] (2.832891846445569,1.0254659940610364)-- (2.3168531886162764,1.0216671244288105);
\draw [line width=2.pt,color=zzttqq] (2.3168531886162764,1.0216671244288105)-- (4.1,3.);
\draw [line width=2.pt,color=zzttqq] (4.1,3.)-- (3.509291958589649,3.0008696189206994);
\draw [line width=2.pt,color=zzttqq] (3.509291958589649,3.0008696189206994)-- (3.8,5.);
\draw [line width=2.pt,color=zzttqq] (3.8,5.)-- (4.1,3.);
\draw [line width=2.pt,color=zzttqq] (4.8,-1.)-- (3.8657942712621005,-2.9859105973373743);
\draw [line width=2.pt,color=zzttqq] (3.8657942712621005,-2.9859105973373743)-- (4.41898751506418,-2.998203780532976);
\draw [line width=2.pt,color=zzttqq] (4.41898751506418,-2.998203780532976)-- (4.8,-1.);
\draw [line width=2.pt,color=zzttqq] (4.8,-1.)-- (5.,-3.);
\draw [line width=2.pt,color=zzttqq] (5.,-3.)-- (5.6114262850375525,-3.010496963728578);
\draw [line width=2.pt,color=zzttqq] (5.6114262850375525,-3.010496963728578)-- (4.8,-1.);
\draw [line width=2.pt,color=zzttqq] (7.8,-1.)-- (7.,-3.);
\draw [line width=2.pt,color=zzttqq] (7.,-3.)-- (7.60292196272504,-2.998203780532976);
\draw [line width=2.pt,color=zzttqq] (7.60292196272504,-2.998203780532976)-- (7.8,-1.);
\draw [line width=2.pt,color=zzttqq] (7.8,-1.)-- (8.15611520652712,-3.010496963728578);
\draw [line width=2.pt,color=zzttqq] (8.15611520652712,-3.010496963728578)-- (8.758481183111607,-3.010496963728578);
\draw [line width=2.pt,color=zzttqq] (8.758481183111607,-3.010496963728578)-- (7.8,-1.);
\begin{scriptsize}
\draw [fill=qqqqff] (6.,1.) circle (2.5pt);
\draw[color=qqqqff] (6.306823073590519,1.062826693367364) node {u};
\draw [fill=qqqqff] (4.2,-1.) circle (2.5pt);
\draw[color=qqqqff] (3.7011987775393953,-0.9356618443805933) node {$w_{2}$};
\draw [fill=qqqqff] (4.8,-1.) circle (2.5pt);
\draw[color=qqqqff] (5.117848880246802,-0.9019320800304167) node {$v_{2}$};
\draw [fill=qqqqff] (5.7,-1.) circle (2.5pt);
\draw[color=qqqqff] (5.573200698974183,-1.188635077006917) node {$w_{3}$};
\draw [fill=qqqqff] (6.3,-1.) circle (2.5pt);
\draw[color=qqqqff] (6.332120396853152,-1.188635077006917) node {$v_{3}$};
\draw [fill=qqqqff] (7.2,-1.) circle (2.5pt);
\draw[color=qqqqff] (7.057310330381943,-1.188635077006917) node {$w_{4}$};
\draw [fill=qqqqff] (7.8,-1.) circle (2.5pt);
\draw[color=qqqqff] (8.153527671762674,-0.9356618443805933) node {$v_{4}$};
\draw [fill=qqqqff] (5.4,1.) circle (2.5pt);
\draw[color=qqqqff] (4.94920005849592,1.05439425227982) node {$w_{1}$};
\draw [fill=qqqqff] (4.1,3.) circle (2.5pt);
\draw[color=qqqqff] (4.392658946718011,3.162504524165851) node {$v_{1}$};
\draw [fill=qqqqff] (3.8,5.) circle (2.5pt);
\draw[color=qqqqff] (4.4179562699806425,5.312777001489602) node {toward root};
\end{scriptsize}
\end{tikzpicture}
\caption{$H_j$ with vertices of star labeled\label{beforesaw}}
    	\end{minipage}  
    \begin{minipage}{0.46\textwidth}
    	\centering
	\definecolor{zzttqq}{rgb}{0,0,0} 
\definecolor{qqqqff}{rgb}{0,0,0} 
\begin{tikzpicture}[line cap=round,line join=round,>=triangle 45,x=1.0cm,y=0.7cm]
\fill[line width=2.pt,dash pattern=on 4pt off 4pt,color=zzttqq,fill opacity=0] (6.,1.) -- (4.2,-1.) -- (4.8,-1.) -- cycle;
\fill[line width=2.pt,dash pattern=on 4pt off 4pt,color=zzttqq,fill opacity=0] (6.,1.) -- (5.7,-1.) -- (6.3,-1.) -- cycle;
\fill[line width=2.pt,dash pattern=on 4pt off 4pt,color=zzttqq,fill opacity=0] (6.,1.) -- (7.2,-1.) -- (7.8,-1.) -- cycle;
\fill[line width=2.pt,dash pattern=on 4pt off 4pt,color=zzttqq,fill opacity=0] (6.,1.) -- (5.4,1.) -- (4.1,3.) -- cycle;
\fill[line width=2.pt,color=zzttqq,fill=zzttqq,fill opacity=0.10000000149011612] (4.1,3.) -- (4.3641429644735465,1.0254659940610364) -- (3.806023398089517,1.0254659940610364) -- cycle;
\fill[line width=2.pt,color=zzttqq,fill=zzttqq,fill opacity=0.10000000149011612] (4.1,3.) -- (2.832891846445569,1.0254659940610364) -- (2.3168531886162764,1.0216671244288105) -- cycle;
\fill[line width=2.pt,color=zzttqq,fill=zzttqq,fill opacity=0.10000000149011612] (4.1,3.) -- (3.509291958589649,3.0008696189206994) -- (3.8,5.) -- cycle;
\fill[line width=2.pt,color=zzttqq,fill=zzttqq,fill opacity=0.10000000149011612] (4.8,-1.) -- (3.8657942712621005,-2.9859105973373743) -- (4.41898751506418,-2.998203780532976) -- cycle;
\fill[line width=2.pt,color=zzttqq,fill=zzttqq,fill opacity=0.10000000149011612] (4.8,-1.) -- (5.,-3.) -- (5.6114262850375525,-3.010496963728578) -- cycle;
\fill[line width=2.pt,color=zzttqq,fill=zzttqq,fill opacity=0.10000000149011612] (7.8,-1.) -- (7.,-3.) -- (7.60292196272504,-2.998203780532976) -- cycle;
\fill[line width=2.pt,color=zzttqq,fill=zzttqq,fill opacity=0.10000000149011612] (7.8,-1.) -- (8.15611520652712,-3.010496963728578) -- (8.758481183111607,-3.010496963728578) -- cycle;
\draw [line width=2.pt,dash pattern=on 4pt off 4pt,color=zzttqq] (6.,1.)-- (4.2,-1.);
\draw [line width=2.pt,dash pattern=on 4pt off 4pt,color=zzttqq] (4.2,-1.)-- (4.8,-1.);
\draw [line width=2.pt,dash pattern=on 4pt off 4pt,color=zzttqq] (4.8,-1.)-- (6.,1.);
\draw [line width=2.pt,dash pattern=on 4pt off 4pt,color=zzttqq] (6.,1.)-- (5.7,-1.);
\draw [line width=2.pt,dash pattern=on 4pt off 4pt,color=zzttqq] (5.7,-1.)-- (6.3,-1.);
\draw [line width=2.pt,dash pattern=on 4pt off 4pt,color=zzttqq] (6.3,-1.)-- (6.,1.);
\draw [line width=2.pt,dash pattern=on 4pt off 4pt,color=zzttqq] (6.,1.)-- (7.2,-1.);
\draw [line width=2.pt,dash pattern=on 4pt off 4pt,color=zzttqq] (7.2,-1.)-- (7.8,-1.);
\draw [line width=2.pt,dash pattern=on 4pt off 4pt,color=zzttqq] (7.8,-1.)-- (6.,1.);
\draw [line width=2.pt,dash pattern=on 4pt off 4pt,color=zzttqq] (6.,1.)-- (5.4,1.);
\draw [line width=2.pt,dash pattern=on 4pt off 4pt,color=zzttqq] (5.4,1.)-- (4.1,3.);
\draw [line width=2.pt,dash pattern=on 4pt off 4pt,color=zzttqq] (4.1,3.)-- (6.,1.);
\draw [line width=2.pt,color=zzttqq] (4.1,3.)-- (4.3641429644735465,1.0254659940610364);
\draw [line width=2.pt,color=zzttqq] (4.3641429644735465,1.0254659940610364)-- (3.806023398089517,1.0254659940610364);
\draw [line width=2.pt,color=zzttqq] (3.806023398089517,1.0254659940610364)-- (4.1,3.);
\draw [line width=2.pt,color=zzttqq] (4.1,3.)-- (2.832891846445569,1.0254659940610364);
\draw [line width=2.pt,color=zzttqq] (2.832891846445569,1.0254659940610364)-- (2.3168531886162764,1.0216671244288105);
\draw [line width=2.pt,color=zzttqq] (2.3168531886162764,1.0216671244288105)-- (4.1,3.);
\draw [line width=2.pt,color=zzttqq] (4.1,3.)-- (3.509291958589649,3.0008696189206994);
\draw [line width=2.pt,color=zzttqq] (3.509291958589649,3.0008696189206994)-- (3.8,5.);
\draw [line width=2.pt,color=zzttqq] (3.8,5.)-- (4.1,3.);
\draw [line width=2.pt,color=zzttqq] (4.8,-1.)-- (3.8657942712621005,-2.9859105973373743);
\draw [line width=2.pt,color=zzttqq] (3.8657942712621005,-2.9859105973373743)-- (4.41898751506418,-2.998203780532976);
\draw [line width=2.pt,color=zzttqq] (4.41898751506418,-2.998203780532976)-- (4.8,-1.);
\draw [line width=2.pt,color=zzttqq] (4.8,-1.)-- (5.,-3.);
\draw [line width=2.pt,color=zzttqq] (5.,-3.)-- (5.6114262850375525,-3.010496963728578);
\draw [line width=2.pt,color=zzttqq] (5.6114262850375525,-3.010496963728578)-- (4.8,-1.);
\draw [line width=2.pt,color=zzttqq] (7.8,-1.)-- (7.,-3.);
\draw [line width=2.pt,color=zzttqq] (7.,-3.)-- (7.60292196272504,-2.998203780532976);
\draw [line width=2.pt,color=zzttqq] (7.60292196272504,-2.998203780532976)-- (7.8,-1.);
\draw [line width=2.pt,color=zzttqq] (7.8,-1.)-- (8.15611520652712,-3.010496963728578);
\draw [line width=2.pt,color=zzttqq] (8.15611520652712,-3.010496963728578)-- (8.758481183111607,-3.010496963728578);
\draw [line width=2.pt,color=zzttqq] (8.758481183111607,-3.010496963728578)-- (7.8,-1.);
\begin{scriptsize}
\draw [fill=qqqqff] (6.,1.) circle (2.5pt);
\draw[color=qqqqff] (6.306823073590519,1.062826693367364) node {u};
\draw [fill=qqqqff] (4.2,-1.) circle (2.5pt);
\draw[color=qqqqff] (3.7011987775393953,-0.9356618443805933) node {$w_{2}$};
\draw [fill=qqqqff] (4.8,-1.) circle (2.5pt);
\draw[color=qqqqff] (5.117848880246802,-0.9019320800304167) node {$v_{2}$};
\draw [fill=qqqqff] (5.7,-1.) circle (2.5pt);
\draw[color=qqqqff] (5.573200698974183,-1.188635077006917) node {$w_{3}$};
\draw [fill=qqqqff] (6.3,-1.) circle (2.5pt);
\draw[color=qqqqff] (6.332120396853152,-1.188635077006917) node {$v_{3}$};
\draw [fill=qqqqff] (7.2,-1.) circle (2.5pt);
\draw[color=qqqqff] (7.057310330381943,-1.188635077006917) node {$w_{4}$};
\draw [fill=qqqqff] (7.8,-1.) circle (2.5pt);
\draw[color=qqqqff] (8.153527671762674,-0.9356618443805933) node {$v_{4}$};
\draw [fill=qqqqff] (5.4,1.) circle (2.5pt);
\draw[color=qqqqff] (4.94920005849592,1.05439425227982) node {$w_{1}$};
\draw [fill=qqqqff] (4.1,3.) circle (2.5pt);
\draw[color=qqqqff] (4.392658946718011,3.162504524165851) node {$v_{1}$};
\draw [fill=qqqqff] (3.8,5.) circle (2.5pt);
\draw[color=qqqqff] (4.4179562699806425,5.312777001489602) node {toward root};
\end{scriptsize}
\end{tikzpicture}
    	\caption{$H_j$, where the edges of star $E_J$ are the dashed triangles \label{duringsaw1}}
    	\end{minipage}
    \end{figure}

\begin{figure}[h]
    	\begin{minipage}{0.46\textwidth}
    		\centering
\definecolor{zzttqq}{rgb}{0,0,0} 
\definecolor{qqqqff}{rgb}{0,0,0} 
\begin{tikzpicture}[line cap=round,line join=round,>=triangle 45,x=1.0cm,y=0.7cm]
\fill[line width=2.pt,color=zzttqq,fill=zzttqq,fill opacity=0.10000000149011612] (4.1,3.) -- (4.3641429644735465,1.0254659940610364) -- (3.806023398089517,1.0254659940610364) -- cycle;
\fill[line width=2.pt,color=zzttqq,fill=zzttqq,fill opacity=0.10000000149011612] (4.1,3.) -- (2.832891846445569,1.0254659940610364) -- (2.3168531886162764,1.0216671244288105) -- cycle;
\fill[line width=2.pt,color=zzttqq,fill=zzttqq,fill opacity=0.10000000149011612] (4.1,3.) -- (3.509291958589649,3.0008696189206994) -- (3.8,5.) -- cycle;
\fill[line width=2.pt,color=zzttqq,fill=zzttqq,fill opacity=0.10000000149011612] (4.8,-1.) -- (3.8657942712621005,-2.9859105973373743) -- (4.41898751506418,-2.998203780532976) -- cycle;
\fill[line width=2.pt,color=zzttqq,fill=zzttqq,fill opacity=0.10000000149011612] (4.8,-1.) -- (5.,-3.) -- (5.6114262850375525,-3.010496963728578) -- cycle;
\fill[line width=2.pt,color=zzttqq,fill=zzttqq,fill opacity=0.10000000149011612] (7.8,-1.) -- (7.,-3.) -- (7.60292196272504,-2.998203780532976) -- cycle;
\fill[line width=2.pt,color=zzttqq,fill=zzttqq,fill opacity=0.10000000149011612] (7.8,-1.) -- (8.15611520652712,-3.010496963728578) -- (8.758481183111607,-3.010496963728578) -- cycle;
\draw [line width=2.pt,color=zzttqq] (4.1,3.)-- (4.3641429644735465,1.0254659940610364);
\draw [line width=2.pt,color=zzttqq] (4.3641429644735465,1.0254659940610364)-- (3.806023398089517,1.0254659940610364);
\draw [line width=2.pt,color=zzttqq] (3.806023398089517,1.0254659940610364)-- (4.1,3.);
\draw [line width=2.pt,color=zzttqq] (4.1,3.)-- (2.832891846445569,1.0254659940610364);
\draw [line width=2.pt,color=zzttqq] (2.832891846445569,1.0254659940610364)-- (2.3168531886162764,1.0216671244288105);
\draw [line width=2.pt,color=zzttqq] (2.3168531886162764,1.0216671244288105)-- (4.1,3.);
\draw [line width=2.pt,color=zzttqq] (4.1,3.)-- (3.509291958589649,3.0008696189206994);
\draw [line width=2.pt,color=zzttqq] (3.509291958589649,3.0008696189206994)-- (3.8,5.);
\draw [line width=2.pt,color=zzttqq] (3.8,5.)-- (4.1,3.);
\draw [line width=2.pt,color=zzttqq] (4.8,-1.)-- (3.8657942712621005,-2.9859105973373743);
\draw [line width=2.pt,color=zzttqq] (3.8657942712621005,-2.9859105973373743)-- (4.41898751506418,-2.998203780532976);
\draw [line width=2.pt,color=zzttqq] (4.41898751506418,-2.998203780532976)-- (4.8,-1.);
\draw [line width=2.pt,color=zzttqq] (4.8,-1.)-- (5.,-3.);
\draw [line width=2.pt,color=zzttqq] (5.,-3.)-- (5.6114262850375525,-3.010496963728578);
\draw [line width=2.pt,color=zzttqq] (5.6114262850375525,-3.010496963728578)-- (4.8,-1.);
\draw [line width=2.pt,color=zzttqq] (7.8,-1.)-- (7.,-3.);
\draw [line width=2.pt,color=zzttqq] (7.,-3.)-- (7.60292196272504,-2.998203780532976);
\draw [line width=2.pt,color=zzttqq] (7.60292196272504,-2.998203780532976)-- (7.8,-1.);
\draw [line width=2.pt,color=zzttqq] (7.8,-1.)-- (8.15611520652712,-3.010496963728578);
\draw [line width=2.pt,color=zzttqq] (8.15611520652712,-3.010496963728578)-- (8.758481183111607,-3.010496963728578);
\draw [line width=2.pt,color=zzttqq] (8.758481183111607,-3.010496963728578)-- (7.8,-1.);
\begin{scriptsize}
\draw [color=qqqqff] (6.,1.)-- ++(-2.5pt,-2.5pt) -- ++(5.0pt,5.0pt) ++(-5.0pt,0) -- ++(5.0pt,-5.0pt);
\draw[color=qqqqff] (6.306823073590519,1.062826693367364) node {u};
\draw [color=qqqqff] (4.2,-1.)-- ++(-2.5pt,-2.5pt) -- ++(5.0pt,5.0pt) ++(-5.0pt,0) -- ++(5.0pt,-5.0pt);
\draw[color=qqqqff] (3.7011987775393953,-0.9356618443805933) node {$w_{2}$};
\draw [fill=qqqqff] (4.8,-1.) circle (2.5pt);
\draw[color=qqqqff] (5.117848880246802,-0.9019320800304167) node {$v_{2}$};
\draw [color=qqqqff] (5.7,-1.)-- ++(-2.5pt,-2.5pt) -- ++(5.0pt,5.0pt) ++(-5.0pt,0) -- ++(5.0pt,-5.0pt);
\draw[color=qqqqff] (5.674389992024713,-1.25609460570727) node {$w_{3}$};
\draw [fill=qqqqff] (6.3,-1.) circle (2.5pt);
\draw[color=qqqqff] (6.332120396853152,-1.188635077006917) node {$v_{3}$};
\draw [color=qqqqff] (7.2,-1.)-- ++(-2.5pt,-2.5pt) -- ++(5.0pt,5.0pt) ++(-5.0pt,0) -- ++(5.0pt,-5.0pt);
\draw[color=qqqqff] (7.057310330381943,-1.188635077006917) node {$w_{4}$};
\draw [fill=qqqqff] (7.8,-1.) circle (2.5pt);
\draw[color=qqqqff] (8.153527671762674,-0.9356618443805933) node {$v_{4}$};
\draw [color=qqqqff] (5.4,1.)-- ++(-2.5pt,-2.5pt) -- ++(5.0pt,5.0pt) ++(-5.0pt,0) -- ++(5.0pt,-5.0pt);
\draw[color=qqqqff] (4.94920005849592,1.05439425227982) node {$w_{1}$};
\draw [fill=qqqqff] (4.1,3.) circle (2.5pt);
\draw[color=qqqqff] (4.392658946718011,3.162504524165851) node {$v_{1}$};
\draw [fill=qqqqff] (3.8,5.) circle (2.5pt);
\draw[color=qqqqff] (4.4179562699806425,5.312777001489602) node {toward root};
\end{scriptsize}
\end{tikzpicture}
\caption{$H_j'$. The vertices marked with an x will be added to $I$.  The subhypertrees containing $v_2$, $v_3$, and $v_4$ will be added to $\cP$ \label{duringsaw2}}
    	\end{minipage}  
    \begin{minipage}{0.46\textwidth}
    	\centering
	\definecolor{zzttqq}{rgb}{0,0,0} 
\definecolor{qqqqff}{rgb}{0,0,0} 
\begin{tikzpicture}[line cap=round,line join=round,>=triangle 45,x=1.0cm,y=0.7cm]
\fill[line width=2.pt,color=zzttqq,fill=zzttqq,fill opacity=0.10000000149011612] (4.1,3.) -- (4.3641429644735465,1.0254659940610364) -- (3.806023398089517,1.0254659940610364) -- cycle;
\fill[line width=2.pt,color=zzttqq,fill=zzttqq,fill opacity=0.10000000149011612] (4.1,3.) -- (2.832891846445569,1.0254659940610364) -- (2.3168531886162764,1.0216671244288105) -- cycle;
\fill[line width=2.pt,color=zzttqq,fill=zzttqq,fill opacity=0.10000000149011612] (4.1,3.) -- (3.509291958589649,3.0008696189206994) -- (3.8,5.) -- cycle;
\draw [line width=2.pt,color=zzttqq] (4.1,3.)-- (4.3641429644735465,1.0254659940610364);
\draw [line width=2.pt,color=zzttqq] (4.3641429644735465,1.0254659940610364)-- (3.806023398089517,1.0254659940610364);
\draw [line width=2.pt,color=zzttqq] (3.806023398089517,1.0254659940610364)-- (4.1,3.);
\draw [line width=2.pt,color=zzttqq] (4.1,3.)-- (2.832891846445569,1.0254659940610364);
\draw [line width=2.pt,color=zzttqq] (2.832891846445569,1.0254659940610364)-- (2.3168531886162764,1.0216671244288105);
\draw [line width=2.pt,color=zzttqq] (2.3168531886162764,1.0216671244288105)-- (4.1,3.);
\draw [line width=2.pt,color=zzttqq] (4.1,3.)-- (3.509291958589649,3.0008696189206994);
\draw [line width=2.pt,color=zzttqq] (3.509291958589649,3.0008696189206994)-- (3.8,5.);
\draw [line width=2.pt,color=zzttqq] (3.8,5.)-- (4.1,3.);
\begin{scriptsize}
\draw [fill=qqqqff] (4.1,3.) circle (2.5pt);
\draw[color=qqqqff] (4.402779313850565,3.1690798886590237) node {$v_{1}$};
\draw [fill=qqqqff] (3.8,5.) circle (2.5pt);
\draw[color=qqqqff] (4.446649363730799,5.318712332790555) node {toward root};
\end{scriptsize}
\end{tikzpicture}
    	\caption{$H_{j+1}$, which is the connected component of $H_j'$ containing the root \label{aftersaw}}
    	\end{minipage}
    \end{figure}

To prove~(\ref{sawing2}) of Lemma~\ref{lemma:sawing}, we recall that our choice of $u$ in step (a) for $j \leq e$ implies that all sons of $u$ have at most $k$ vertices in their progeny.
These sons and their progeny make up the vertex sets of the subhypergraphs that we add to $\cP$ in step (d).
This, along with Fact~\ref{prop1}, represents the only two ways that subhypergraphs can be added to $\cP$.
We know then that
$$k \geq \vert V(P) \vert \text{, for any} \in \cP$$
which proves~(\ref{sawing2}) of Lemma~\ref{lemma:sawing}.

To prove~(\ref{sawing1}), we need to show that very few of the vertices of $T$ end up in $I$ (and therefore, most of the vertices of $T$ will be in subhypertrees in $\cP$).
For all $j \leq e$, 
$$\vert V(H_{j}) \setminus V(H_{j+1})\vert > \frac{k-d}{d-1}.$$
This is because in $H_j$, $u$ has more than $\frac{k-d}{d-1}$ vertices in its progeny, and none of the progeny are in $V(H_{j+1})$.
With each iteration of the sawing procedure, at most $d+1$ vertices are added to $I$: one for $u$ and at most $d$ for the celibate neighbors of $u$.
The remaining vertices in $V(H_{j}) \setminus V(H_{j+1})$ must therefore be in an element of $\cP$ (by step (d)), meaning that $\sum_{P \in \cP} \vert V(P) \vert$ increases by at least $\frac{k-d}{d-1} - (d+1)$ with each iteration.

By Fact~\ref{prop1}, if $H_{e+1}$ exists, one subhypertree is added to $\cP$ and no vertices are added to $I$.
This implies that at the end of the decomposition,
$$\frac{\sum_{P \in \cP} \vert V(P) \vert}{\vert I \vert} \geq \frac{\frac{k-d}{d-1} - (d+1)}{d+1} = \frac{k-d}{d^2-1}-1,$$
and since $\vert I \vert + \sum_{P \in \cP} \vert V(P) \vert = \vert V(T) \vert = n$, we have that
$$\vert I \vert \leq \left( \frac{d^2-1}{k-d}\right) n \leq \left( \frac{2d^2}{k}\right) n,$$
which is~(\ref{sawing1}) of Lemma~\ref{lemma:sawing}.

Now we establish~(\ref{sawing3}) of the lemma.
As described in step (d), for every subhypergraph $P \in \cP$, $P$ contains at least one vertex $v_{i}$.
Each of these $v_{i}$ has a corresponding isolated vertex $w_{i}$ that was placed into $I$ in step (c).
Therefore
\begin{equation*}\label{eq1}
\vert I \vert \geq \vert \cP \vert,
\end{equation*}
proving~(\ref{sawing3}).

Proving~(\ref{sawing4}) is similarly clear.
For every iteration of the sawing procedure, one star is added to $\cE$ (in step (b)), and at least
one subhypertree is added to $\cP$ (in step (d)). 
Therefore
$$ \vert \cP \vert \geq \vert \cE \vert.$$.

To find the minimum cardinality of $\cP$ over all hypertrees on $n$ vertices (and show~(\ref{sawing5})), consider how many vertices of $T$ are removed during a single iteration of the sawing procedure.
By step (c), $1+deg(u)$ vertices are put into $I$.
By step (d), $\sum_{i=2}^{deg(u)} \vert V(P_{j,i})\vert$ vertices are put into $\cP$.
These vertices are divided between $deg(u)-1$ subhypertrees.
Therefore the number of vertices removed per subhypertree added to $\cP$ is at most
$$\frac{1 + deg(u) + \sum_{i=2}^{deg(u)} \vert V(P_{j,i})\vert }{deg(u) - 1} \leq \frac{1 + deg(u) + \sum_{i=2}^{deg(u)} k}{deg(u) - 1} \leq k + 3.$$
Summing over all iterations of the sawing procedure, we get that
$$\frac{n}{\vert \cP \vert} \leq k+3,$$
proving~(\ref{sawing5}).

In order to verify~(\ref{sawing6}), recall that by step (c), the only vertices added to $I$ are of the form $w_{j,i}$ and $u_j$, for some $i,j$.
For some vertex $v_{j,i}$ to be in $I$, $v_{j,i}$ would need to be the same vertex as some $u_h$.  ($v_{j,i}$ is not celibate so it cannot be some $w_{h,l}$.)
By the coloring of $V(T)$, $u_h$ is red.
However, $v_{j,i}$ is blue, since it is adjacent to the center (red) vertex $u_j$ of star $E_j$, and in the coloring no two red vertices are adjacent.
Therefore $v_{j,i} \not \in I$.
This completes the proof of~(\ref{sawing6}) and of Lemma~\ref{lemma:sawing}.

\end{proof}

\section{Searching for Disjoint Stars}

\begin{lemma}\label{lemma:stars}
Let $S$ be a Steiner triple system on $m$ vertices.
Then for any set of vertices $v_1, v_2, \ldots, v_c \in V(S)$, $c\leq d$, there are $s(c) = \frac{m}{c^2+1}$ stars $S_1, \ldots, S_{s(c)}$ so that
\begin{enumerate}

\item $S_l = \{ \{v_i,w_i^{(l)},u^{(l)}\}, 1 \leq i \leq c\} \subset E(S)$ is a star centered at $u^{(l)}$, for $l=1,\ldots,s(c)$.
\item The sets $W_l = \{w_1^{(l)},\ldots,w_c^{(l)},u^{(l)}\}$, $l=1,\ldots,s(c)$, are pairwise disjoint subsets of $V(S)$.
\end{enumerate}
\end{lemma}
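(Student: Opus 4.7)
The plan is to build the stars $S_1, S_2, \ldots, S_{s(c)}$ greedily, one at a time. The underlying tool is the defining property of a Steiner triple system: any two distinct vertices lie in a unique edge. Consequently, once a prospective center $u \in V(S) \setminus \{v_1, \ldots, v_c\}$ is fixed, the third vertex $w_i^{(u)}$ of the edge through the pair $\{v_i, u\}$ is uniquely determined for each $i$, and automatically $w_i^{(u)} \neq v_i, u$. Moreover the $w_i^{(u)}$'s are pairwise distinct as $i$ varies, since otherwise the pair $\{u, w_i^{(u)}\}$ would lie in two distinct edges, violating the Steiner property.

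Suppose $S_1, \ldots, S_{l-1}$ have already been built, and set $W = \bigcup_{l' < l} W_{l'}$. Call a vertex $u \in V(S)$ \emph{bad} if any of the following hold: (A) $u \in \{v_1, \ldots, v_c\}$; (B) $w_i^{(u)} = v_j$ for some $i \neq j$; (C) $u \in W$; (D) $w_i^{(u)} \in W$ for some $1 \leq i \leq c$. Conditions (A) and (B) would prevent $S_l$ from being a genuine $c$-edge star, while (C) and (D) would violate disjointness of $W_l$ from $W$. Provided fewer than $m$ vertices are bad, any non-bad vertex serves as $u^{(l)}$ and the greedy step succeeds.

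Type (A) contributes at most $c$ bad vertices, and (B) contributes at most $\binom{c}{2}$, since for each unordered pair $\{i, j\}$ the only bad $u$ is the third vertex of the unique edge through $v_i$ and $v_j$. The key step is to bound the joint contribution of (C) and (D) by $(c^2 + 1)(l - 1)$. Fixing $l' < l$, the set $W_{l'}$ has size $c + 1$ and contributes that many bad $u$ via (C). For (D), each pair $(i, w)$ with $1 \leq i \leq c$ and $w \in W_{l'}$ forces $u$ to be the unique third vertex of the pair $\{v_i, w\}$, yielding $c(c+1)$ candidates in all. However, when $w = u^{(l')}$ the resulting $u$ equals $w_i^{(l')}$, and when $w = w_i^{(l')}$ it equals $u^{(l')}$; these $2c$ pairs all return values of $u$ already in $W_{l'}$, hence already counted under (C). The remaining at most $c(c+1) - 2c = c^2 - c$ pairs contribute genuinely new bad vertices via (D), giving a total of $(c+1) + (c^2 - c) = c^2 + 1$ per previously built star.

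Summing gives at most $c + \binom{c}{2} + (c^2 + 1)(l - 1)$ bad candidates, strictly less than $m$ whenever $l \leq s(c) = m/(c^2 + 1)$, so the greedy procedure runs for all $s(c)$ steps. I expect the overlap computation above to be the main obstacle: only by exploiting the symmetric coincidences that the third vertex of $\{v_i, u^{(l')}\}$ is $w_i^{(l')}$ and the third vertex of $\{v_i, w_i^{(l')}\}$ is $u^{(l')}$ does one reach the constant $c^2 + 1$, instead of the naive $(c+1)^2$ that would yield only $\approx m/(c+1)^2$ stars.
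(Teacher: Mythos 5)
Your proposal is correct and follows essentially the same approach as the paper's proof. Both arguments are greedy: fix a candidate center $u$, let the Steiner property uniquely determine the $w_i$'s, and maintain a count of how many choices of $u$ are forbidden. Your conditions (A) and (B) match the paper's exclusion of $\{v_1,\dots,v_c\}$ and of the set $Q$ of third vertices of edges $\{v_i,v_j,\cdot\}$, contributing $c + \binom{c}{2} < c^2+1$. Your conditions (C) and (D), after the $2c$-pair overlap accounting, match the paper's $W_{l'} \cup Q^{(l')}$ with $|W_{l'}| = c+1$ and $|Q^{(l')}| \leq c(c-1)$ (the paper only ranges over $i \neq j$ in defining $Q^{(l')}$, which is exactly what your overlap analysis explains from scratch), giving the same per-step depletion of $c^2+1$. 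The only difference is presentational: the paper carries along an explicitly shrinking set $V^{(l)}$ of surviving candidate centers, while you count the bad vertices directly at each step; your remark about the symmetric coincidences that cut $(c+1)^2$ down to $c^2+1$ is a useful clarification of why the paper's $Q^{(l')}$ needs only the off-diagonal pairs.
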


\begin{proof}\label{proof:stars}
Fix any $v_1, v_2, \ldots, v_c \in V(S)$. 
We prove the lemma by induction, in each step constructing a new star.
As a base case, we construct a set $V^{(1)}\subset V(S)$ of vertices that may serve as the center $u^{(1)}$ of the first star $S_1$.
Let
$$Q = \{ u\in V(S): \exists v_i, v_j \text{ such that } \{u, v_i, v_j\} \in E(S)\}.$$
The vertices of $Q$ cannot be used as $u^{(1)}$.
Set
$$V^{(1)} = V(S) \setminus  \{v_1, v_2, \ldots, v_c\} \setminus Q $$
and observe that
$$\vert V^{(1)} \vert \geq m -c -\binom{c}{2} > m-(c^2+1).$$

Select any vertex in $V^{(1)}$ and call it $u^{(1)}$.
The vertex $u^{(1)}$ and each vertex $v_i$, $1\leq i \leq c$, share an edge with some vertex $w_i^{(1)} \in V(S)$.
Each $w_i^{(1)}$ is distinct from each $v_j$, because otherwise $u^{(1)}$ would be in $Q$.
The edges $\{ v_i, w_i^{(1)}, u^{(1)} \}$, $1 \leq i \leq c$, form a star $S_1$ in $S$.

Now we consider the set
$$Q^{(1)} = \{ u_{ij}^{(1)}\in V(S): \exists v_i, w_j^{(1)}, i\neq j \text{ such that } \{v_i, w_j^{(1)}, u_{ij}^{(1)}\} \in E(S)\}.$$
Note that $\vert Q^{(1)} \vert \leq c(c-1)$.
If we attempted using $u_{ij}^{(1)}\in Q^{(1)}$ as the center of some other star, that star would contain the edge $\{v_i, w_j^{(1)}, u_{ij}^{(1)}\}$, so two different stars would use the vertex $w_j^{(1)}$.
Excluding these vertices $u_{ij}^{(1)}$ from the center of any future star $S_l, l\geq 2$, ensures that 
$$V(S_1) \cap V(S_l) = \{v_1, \ldots, v_c\}$$
and so
$$W_1 \cap W_l = \emptyset.$$
Set
$$V^{(2)} = V^{(1)} \setminus  W_1 \setminus Q^{(1)} .$$
It follows that 
$$\vert V^{(2)} \vert \geq \vert V^{(1)} \vert -(c+1) -c(c-1) > m - 2(c^2+1).$$
Thus we have constructed one star $S_1$ and the set $V^{(2)}$ such that any star $S_l$ whose center $u^{(l)}$ is in $V^{(2)}$ will have $W_l$ disjoint from $W_1$.

Having completed the base case, we assume by induction that the stars $S_1,\ldots, S_{l-1}$ have been constructed.
We assume the set $V^{(l)}$ has the property that for any star $S_l$ whose center is in $V^{(l)}$, $W_l$ will be disjoint from $W_1$, $W_2$, \ldots, $W_{l-1}$.
We also assume inductively that $\vert V^{(l)} \vert > m - l(c^2+1)$.
Choose any vertex $u^{(l)} \in V^{(l)}$ to be the center of star $S^{(l)}$.
For each $i$, $1\leq i \leq c$, let $w_i^{(l)}$ be the unique vertex with $\{v_i, w_i^{(l)}, u^{(l)}\} \in E(S)$.
Observe that by induction,
$$w_i^{(l)} \not\in \bigcup_{k<l} V(S_k).$$
The edges $\{ v_i, w_i^{(l)}, u^{(l)} \}$, $1 \leq i \leq c$, form a star $S_l$ in $S$, and 
$$W_l \cap \left( \bigcup_{k<l} W_k \right) = \emptyset.$$

Let
$$Q^{(l)} = \{ u_{ij}^{(l)}\in V(S): \exists v_i, w_j^{(l)}, i\neq j \text{ such that } \{v_i, w_j^{(l)}, u_{ij}^{(l)}\} \in E(S)\}.$$
and set 
$$V^{(l+1)} = V^{(l)} \setminus  W_l \setminus Q^{(l)} .$$
For the same reason as in the base case, a star centered at any $u^{(l+1)} \in V^{(l+1)}$ will intersect each of $S_1,\ldots, S_{l}$ precisely at $v_1, \ldots, v_c$.
By the inductive hypothesis,
\begin{equation}\label{eq3}
\vert V^{(l+1)}\vert \geq \vert V^{(l)} \vert - (c+1) - c(c-1) > m - (l+1) (c^2+1).
\end{equation}
This completes the induction.

We can continue forming new stars $S_l$ from sets $V^{(l)}$ in this way as long as $V^{(l)}$ has at least one vertex in it to become $u^{(l)}$.
Therefore, by equation~(\ref{eq3}), we can construct at least $\frac{m}{c^2+1}$ stars.
The same process can be conducted for any $c$-tuple, where $c\leq d$, completing the proof.
\end{proof}

\section{Selecting the Reservoir}

Let $\cP = \{ P_1, P_2, \ldots, P_l\}$ be an enumerated collection of hypertrees as formed by the decomposition of $T$ in Lemma~\ref{lemma:sawing}, and $I$ the set of independent vertices formed by the decomposition.
Consider
\begin{equation}\label{Psum}
P = \bigvee_{j=1}^l P_j
\end{equation}
as a forest consisting of (vertex-disjoint) members of $\cP$.
Then
$$V(T) = I \cup V(P).$$
Next we are going to randomly select a set $R \subset V(S)$ and show that its properties allow us to find $T$ in $S$ in such a way that $I \subset R$ and $V(P) \subset V(S) \setminus R$.
We will call the set $R$ the \emph{reservoir}.

Let $a\sim b$ mean that $\lim_{m\to \infty} \frac{a}{b} = 1$.

\begin{lemma}\label{lemma:random}
Let $S = (V,E)$ be a Steiner triple system on $m$ vertices, and let $\epsilon>0$ be small.
Then there exists a subset $R \subset V$ and a hypergraph $\tS = S[V \setminus R]$ induced on the set $V\setminus R$ that have the following properties.
\begin{enumerate}
\item $\vert R \vert \sim \epsilon m$
\item $\vert V(\tS) \vert \sim (1-\epsilon) m$
\item Any vertex $v \in V(\tS)$ has $deg (v) \sim(1-\epsilon)^2 \frac{m}{2}$ in $\tS$.
\item For any $c$-tuple of vertices in $V(S)$, where $c\leq d$, at least $r(c) = \frac{\epsilon^{c+1}m}{2(c^2+1)}$ of the disjoint sets $W_l$ guaranteed by Lemma~\ref{lemma:stars} lie entirely in $R$.
\end{enumerate}
\end{lemma}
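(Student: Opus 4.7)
The natural approach is the probabilistic method: include each vertex of $V(S)$ in $R$ independently with probability $\epsilon$, and then show that all four properties hold simultaneously with probability $1 - o(1)$, using Chernoff bounds and a union bound. This gives positive probability, hence existence.

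Properties (1) and (2) are immediate: $|R|$ is a sum of $m$ independent Bernoulli$(\epsilon)$ variables with mean $\epsilon m$, so Chernoff yields $|R| \sim \epsilon m$ with exponentially small failure probability, and $|V(\tS)| = m - |R| \sim (1-\epsilon)m$. For property (3), I would fix $v \in V(S)$ and exploit the Steiner property: the $(m-1)/2$ edges of $S$ through $v$ partition $V(S)\setminus\{v\}$ into \emph{disjoint} pairs $\{x_i,y_i\}$, so the indicators $\mathbf{1}[x_i \notin R,\; y_i \notin R]$ are independent Bernoulli$((1-\epsilon)^2)$ variables. Their sum equals $\deg_{\tS}(v)$ whenever $v \notin R$, has mean $(1-\epsilon)^2(m-1)/2$, and concentrates by Chernoff. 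A union bound over the $m$ choices of $v$ gives property (3) for every $v \in V(\tS)$.

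For property (4), I would fix $c \le d$ and a $c$-tuple $v_1,\ldots,v_c \in V(S)$, and appeal to Lemma~\ref{lemma:stars} to produce $s(c) = m/(c^2+1)$ pairwise disjoint sets $W_1,\ldots,W_{s(c)} \subset V(S)\setminus\{v_1,\ldots,v_c\}$, each of size $c+1$. Because the $W_l$'s are disjoint, the indicators $\mathbf{1}[W_l \subseteq R]$ are mutually independent Bernoulli$(\epsilon^{c+1})$ variables, so the number of $W_l$'s entirely contained in $R$ has mean $\epsilon^{c+1} m/(c^2+1)$ and by Chernoff exceeds $r(c) = \epsilon^{c+1} m/(2(c^2+1))$ except with probability $\exp(-\Omega(\epsilon^{c+1} m))$.

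The only delicate point is the union bound for property (4), since it must hold for \emph{every} $c$-tuple with $c \le d$. The number of such tuples is at most $\sum_{c \le d}\binom{m}{c} \le d \cdot m^d$, which is polynomial in $m$ with $d$ fixed, while the failure probability for each tuple decays like $\exp(-\Omega(\epsilon^{d+1} m))$. Under the hierarchy $\epsilon \gg 1/m$, this exponential beats the polynomial, so the total failure probability is $o(1)$. Combining with the union bounds for (1)--(3) shows that a random $R$ satisfies all four properties with probability tending to $1$, and in particular such an $R$ exists. I do not anticipate any obstacle beyond bookkeeping the Chernoff constants against the parameter hierarchy.
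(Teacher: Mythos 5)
Your proposal is correct and follows essentially the same route as the paper: a random $R$ with independent inclusion probability $\epsilon$, Chernoff bounds for (1)--(4), and a union bound over the polynomially many $c$-tuples for (4), which wins because $r(c) = \Theta(\epsilon^{c+1} m)$ grows linearly in $m$. You are slightly more explicit than the paper in one useful spot: for property (3) you observe that the Steiner property makes the $(m-1)/2$ pairs through $v$ disjoint, so the survival indicators are genuinely independent and Chernoff applies directly, a point the paper leaves implicit.
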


\begin{proof}
Consider a set $R \subset V$ such that each vertex is chosen randomly
and independently with probability $\epsilon$.
We will use the following form of Chernoff bound (inequality (2.9) in~\cite{JLR}).
We will then fix an $R$ that has all of the desired properties.
\begin{theorem}[\cite{JLR}]\label{chernoff1}
For $\delta \leq 3/2$ and binomially distributed random variable $X$,
$$\mathbb{P}\left( \vert X - \mathbb{E} (X) \vert \geq \delta \mathbb{E} (X) \right) \leq 2 \exp \left( -\frac{\delta^2}{3} \mathbb{E}(X)\right).$$
\end{theorem}

For a reservoir $R$ selected randomly as described above, the expected number of vertices in $R$ is $\epsilon m$.
Since vertices are selected independently in $R$, we can apply Theorem~\ref{chernoff1}.
Almost surely the number of vertices chosen will be close to the expectation, so
$$\vert R \vert \sim \epsilon m \hspace*{1cm} \text{ and } \hspace*{1cm} \vert V(\tS) \vert \sim (1-\epsilon)m,$$
showing that (1) and (2) of the lemma hold with probability $1-o(1)$.

Considering (3), note that any vertex $v \in V(\tS)$ has degree $\frac{m-1}{2}$ in $S$.
For every edge containing $v$ in $S$, the edge contains two other vertices.
The probability that each of these vertices is in $\tS$ is $1-\epsilon$, so the expected number of edges incident to $v$ contained totally in $\tS$ is $(1-\epsilon)^2 \frac{m-1}{2}$.
Again by Theorem~\ref{chernoff1}, the degree of $v$ in $\tS$ is close to the expectation, so 
$$deg_{\tS} (v) \sim(1-\epsilon)^2 \frac{m}{2}.$$

Finally we consider (4) of the lemma and fix any $c$-tuple $\{v_1, \ldots, v_c\}$ of vertices in $S$ for some $c\leq d$.
By Lemma~\ref{lemma:stars}, there are $s(c) = \frac{m}{c^2+1}$ sets $W_l$, $l=1,\ldots,s(c)$, that are pairwise disjoint.
We need to show that $\frac{\epsilon^{c+1}m}{2(c^2+1)}= r(c)$ of these sets $W_l$ are subsets of $R$.
Let $X_l$ be the event that all $c+1$ vertices of $W_l$ are in $R$.
Clearly $\mathbb{P}(X_l) = \epsilon^{c+1}$, and if 
$$X = \sum_{l=1}^{s(c)} I(X_l)$$
(where $I(X_l)$ is an indicator random variable), then 
$$\mathbb{E}( X) =  \frac{\epsilon^{c+1}m}{c^2+1} = 2r(c).$$
Since the sets $W_l$, $1\leq l \leq \frac{m}{c^2+1}$, are disjoint, the events $X_l$ are independent, so we may apply Theorem~\ref{chernoff1} with $\delta=1/2$.
Then
$$\mathbb{P}\left( X \leq r(c) \right) \leq 2 \text{exp}\left( \frac{-r(c)}{6} \right).$$
So if $Y = Y(v_1,\ldots, v_c)$ is the event that for a fixed $v_1, \ldots, v_c$, there are fewer than $r(c)$ sets $W_l$ in $R$, then the probability of $Y$ is exponentially small in $m$ (since $r(c)$ grows with $m$).

Consequently the probability of $Y(v_1, \ldots, v_c)$ happening for any choice of $v_1,\ldots,v_c$ where $c$ can be any integer between 1 and $d$ is bounded by
$$\sum_{c=1}^d \binom{m}{c}\text{exp}\left( \frac{-r(c)}{4} \right) = o(1).$$
Thus with probability $1-o(1)$, for every $c$-tuple in $V(S)$, there are at least $r(c)$ pairwise-disjoint sets $W_l$ contained in $R$.

Since with probability $1-o(1)$, $R$ has each of the four properties listed by Lemma~\ref{lemma:random}, we can fix a set $R$ that has all four properties, completing the proof.

\end{proof}

\section{Embedding the Subhypertrees}

Let $P$ be as defined in Equation~\ref{Psum}.

\begin{lemma}\label{lemma:matching}
Let $\tS$ be an induced subhypergraph of a Steiner triple system $S$ on $m\geq(1+\mu)n$ vertices, such that $\vert V(\tS) \vert \sim (1-\epsilon) m$ and every vertex $v \in V(\tS)$ has $deg (v) \sim(1-\epsilon)^2 \frac{m}{2}$.
Then $\tS$ contains a copy of $P$.
\end{lemma}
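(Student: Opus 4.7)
The plan is to embed the components $P_1,\ldots,P_l$ of $P$ into $\tS$ one at a time by a greedy breadth-first procedure. For each $P_j$, I first pick an unused vertex of $V(\tS)$ as the image of a chosen root of $P_j$; such a vertex exists because $|V(\tS)|-|V(P)|\sim\bigl((1-\epsilon)(1+\mu)-1\bigr)n>0$ by the hierarchy $\mu\gg\epsilon$. Then I traverse $P_j$ in BFS order, extending one hyperedge at a time: given a tree-vertex $v$ already placed at $\phi(v)$ and a tree-edge $\{v,a,b\}$ of $P_j$ with $a,b$ not yet placed, I choose an $\tS$-edge $\{\phi(v),x,y\}$ with both $x,y\in V(\tS)$ unused and set $\phi(a)=x$, $\phi(b)=y$.

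The key structural fact is that for every $\phi(v)\in V(\tS)$ the link of $\phi(v)$ in $\tS$, i.e., the set of pairs $\{x,y\}$ with $\{\phi(v),x,y\}\in E(\tS)$, is a partial matching on $V(\tS)\setminus\{\phi(v)\}$ of size $\sim(1-\epsilon)^2 m/2$; this linearity is inherited from $S$, in which every pair lies in exactly one edge. Since the link is a matching, each vertex of the current used set $U\subseteq V(\tS)$ lies in at most one pair of the link and hence blocks at most one potential extension. Consequently, the number of pairs available through $\phi(v)$ is at least $\deg_{\tS}(\phi(v))-|U|$, and the greedy step succeeds whenever this quantity is positive.

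The main obstacle is that the bound $\deg_{\tS}(\phi(v))-|U|$ is delicate: in the worst case $|U|$ grows to $|V(P)|\leq n$, whereas $\deg_{\tS}(\phi(v))\sim(1-\epsilon)^2 m/2\sim(1+\mu)n/2$ need not exceed $n$ when $\mu$ is small. To handle small $\mu$ I plan to exploit the subdivision structure of each $P_j$. Rather than embedding the full hypertrees at once, I first embed the underlying graph trees $T'_j$ (which together contain only $\sim n/2$ vertices, since each $P_j$ is a subdivision tree) into the shadow graph $G$ on $V(\tS)$, whose edges are the pairs lying inside an $\tS$-edge; here $\deg_G(v)=2\deg_{\tS}(v)\sim(1-\epsilon)^2 m\sim m$, providing much more slack. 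Each subdivision vertex $z_{xy}$ is then forced to be placed at the unique Steiner completion $c_{xy}$ of $\{\phi(x),\phi(y)\}$, which lies in $V(\tS)$ by the definition of the shadow graph. The hardest step, as I see it, will be arranging the extensions of $T^{*}=\bigvee_{j}T'_{j}$ so that the $c_{xy}$'s remain pairwise distinct and disjoint from $\phi(V(T^{*}))$: at each extension from $\phi(v)$ the involution $u\mapsto c_{vu}$ on the shadow-neighborhood of $\phi(v)$ implies that each vertex of $U$ forbids at most two choices of the next $u$, so greedy succeeds provided $|U|$ stays safely below $\deg_G(\phi(v))/2\sim m/2$. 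Pushing this argument down to arbitrarily small $\mu$ will likely need a careful ordering of the extensions combined with the slack $1/k\ll\mu$ from the hierarchy, ensuring that each tree $P_j$ is small enough relative to the gap $m-n$ to avoid exhausting the shadow neighborhood of any vertex in use.
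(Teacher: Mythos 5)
Your greedy approach, in either variant, has a gap that you flag but do not close: the density of $\tS$ is simply too low for the extensions to survive when $\mu$ is small. Take the shadow-graph variant. Each vertex $w$ of the current used set $U$ forbids at most two choices of the next image $u$, namely $u=w$ and the unique $u$ with $c_{\phi(v),u}=w$; so at a generic extension from $\phi(v)$ the number of legal choices is at least $\deg_G(\phi(v)) - 2|U| \sim (1-\epsilon)^2 m - 2|U|$. But $U$ must eventually contain all $\sim n$ images of $V(P)$ (the $\sim n/2$ images of $V(T^*)$ together with the $\sim n/2$ Steiner completions serving as subdivision vertices), so the greedy step is guaranteed to have a choice only when $(1-\epsilon)^2 m > 2n$, i.e.\ roughly $\mu > 1$. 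The slack $1/k \ll \mu$ in the hierarchy does not help here: $|U|$ is governed by $|V(P)|\sim n$, not by the component size $k$, so the ``careful ordering'' you propose would have to beat a genuine density barrier rather than a bookkeeping inconvenience. (Your first, non-shadow variant has the same defect, with $\deg_{\tS}(\phi(v)) - |U| \sim (1-\epsilon)^2 m/2 - |U|$ again forcing $\mu > 1$.) In short, the linearity of the link and the shadow-graph reduction are correct and useful observations, but they do not by themselves push the argument below $\mu \approx 1$.

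The paper proves the lemma by a different mechanism aimed precisely at defeating this barrier. It uses the fact that every component of $P$ has at most $k$ vertices (Lemma~\ref{lemma:sawing}), groups the $P_j$ into the $t < 3^k$ isomorphism classes, and builds one bounded-size ``sample forest'' $F$ containing $\lambda_i = \lceil k3^k l_i/l\rceil$ copies of the $i$-th type. It then defines an $r$-uniform auxiliary hypergraph $A$ on $V(\tS)$ whose edges are the $r$-sets inducing a copy of $F$, verifies the degree/codegree hypotheses of Theorem~\ref{thm:FR} (Frankl--R\"odl near-perfect matching), and extracts $(1-o(1))|V(A)|/r \ge l/(k3^k)$ pairwise disjoint copies of $F$, hence at least $l_i$ disjoint copies of each type $T_i$, hence a copy of $P$. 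The nibble theorem is exactly what lets the packing fill up $(1-o(1))|V(\tS)|$ of the vertex set, i.e.\ go all the way down to $\mu \gg \epsilon$, where a one-vertex-at-a-time greedy embedding necessarily runs out of room. To salvage your approach you would either need to rediscover a nibble-type near-perfect packing of the small components, or accept a much weaker range of $\mu$ than the lemma claims.
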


Recall that
$$\vert \cP \vert = l.$$
For the proof of Lemma~\ref{lemma:matching}, we find it convenient for $l / {k3^k}$ to be an integer, where $k$ is as in Lemma~\ref{lemma:sawing}.
If not, add isolated vertices to $\cP$ (and to $P$), increasing $l$, until ${k3^k}$ divides $l$.
If we can find a copy of this larger $P$ in $\tS$, then surely we can find a copy of the original $P$ in $\tS$.

Consider a partition of $\cP$ with each partition class $\cC_i$ consisting of those members of $\cP$ that are pairwise isomorphic hypertrees.
Let
$$T_i= \text{isomorphism type of } \cC_i$$
and let
$$l_i = \vert \cC_i \vert.$$
Denoting the number of isomorphism classes by $t$, then
$$\sum_{i=1}^t l_i= \vert \cP \vert = l.$$
P\'olya (see~\cite{LPV} and~\cite{Pol}) showed an upperbound on the number of isomorphism classes of a tree on $k$ vertices.
Specifically,
$$t<3^k.$$
Recall that by assumption $k3^k \ll n.$
Since $ \frac{n}{k+3} \leq l$ (cf. Lemma~\ref{lemma:sawing}), we see that $t \ll l$, and in fact
$$ t < {k3^k} \ll l$$
for $n$ large.
In order to find the desired embedding of $P$ in $\tS$, we will first consider a "small" (with size independent of $n$) forest consisting of trees which form a "statistical sample" of $\cP$.
Next, we will find an almost perfect packing of vertex-disjoint copies of the small forest in $\tS$.
Finally, we will show that among the union of the copies of the small forest in $\tS$, there is a copy of $P$, meaning $P$ is a subhypergraph of $\tS$.

\begin{proof}\label{proof:matching}

We want to select a sampling of hypertrees from $\cP$ that has representatives from each partition class in proportion with the size of the class.

We first construct a forest consisting of about ${k3^k}$ hypertrees from $\cP$.
Let
$$\lambda_i = \Big\lceil \frac{{k3^k}l_i}{l}\Big\rceil,$$ so that if we were to choose ${k3^k}$ hypertrees randomly and independently from $\cP$, we would expect about $\lambda_i$ hypertrees of type $T_i$.
Consider the forest 
$$F = \bigvee_{i=1}^t \lambda_i T_i,$$
containing $\lambda_i$ vertex-disjoint hypertrees from class $T_i$, $i=1,\ldots, t$.
Let
$$\sum_{i=1}^t \lambda_i = \lambda > k3^k$$
be the number of connected components in the forest $F$.  In the remaining part of the proof we will show the following.

\begin{claim}\label{claim:forest}
The hypergraph $\tS$ contains $ \frac{l}{{k3^k}} $ vertex-disjoint copies of $F$.
\end{claim}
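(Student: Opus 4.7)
Plan: The approach is to construct the packing sequentially, embedding copies $F^{(1)}, \ldots, F^{(l/(k3^k))}$ of $F$ one at a time into $\tS$, while maintaining the set $U$ of vertices already used. To embed a single copy of $F = \bigvee_{i=1}^t \lambda_i T_i$ into $\tS \setminus U$, I would process its $\lambda = \sum_i \lambda_i$ constituent hypertrees in turn. Each hypertree is embedded by a BFS rooted at an arbitrary vertex: at each BFS extension from a placed vertex $v' \in V(\tS)$ (the image of some vertex $v$) along a hypertree edge $\{v,x,y\}$, I would locate vertices $x', y' \in V(\tS)$ avoiding the current used set $U'$ (which comprises $U$ together with vertices already placed in the present copy) such that $\{v', x', y'\} \in E(\tS)$. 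By the linearity of $S$, each $u \in U'$ blocks at most one edge through $v'$, so at least $\deg_{\tS}(v') - |U'|$ valid extensions exist.

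The vertex accounting is as follows. Using $\lambda_i \le \lceil k 3^k l_i/l\rceil \le k 3^k l_i/l + 1$ and $|T_i| \le k$, one gets $|V(F)| = \sum_i \lambda_i |T_i| \le (k 3^k / l)|V(P)| + kt \le (k 3^k/l)|V(P)| + k \cdot 3^k$. Multiplying by $l/(k 3^k)$, the total vertex usage over the full packing is at most $|V(P)| + l$, which by Lemma~\ref{lemma:sawing} parts~(\ref{sawing1}) and~(\ref{sawing3}) is at most $n(1 + 2d^2/k) \le (1 + \mu/4)n$ under the hierarchy $1/k \ll \mu/d^2$ and $\epsilon \ll \mu$. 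Since $|V(\tS)| \sim (1-\epsilon)(1+\mu)n \ge (1 + \mu/2)n$, a global slack of $\Omega(\mu n)$ unused vertices persists throughout the process.

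The main obstacle is ensuring that no individual BFS extension step fails. The bound $\deg_{\tS}(v') - |U'|$ only guarantees $\sim (1-\epsilon)^2 m/2 - |U'| \sim (1+\mu)n/2 - |U'|$ valid options, which can become non-positive when $|U'|$ approaches $n$ in the final stages of the packing; thus a naive greedy risks getting stuck even though the global vertex budget is comfortable. I expect to resolve this via an absorbing-style argument: at the outset reserve a small set $A \subset V(\tS)$ with the property that any partially-embedded copy of $F$ can be completed using vertices of $A$, conduct the bulk of the greedy packing on $V(\tS) \setminus A$ (where the slack is generous), and invoke $A$ to finish whenever a generic extension would otherwise fail. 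An alternative would be to invoke the R\"odl nibble on the auxiliary 3-uniform hypergraph whose hyperedges are the vertex sets of copies of $F$ in $\tS$, verifying near-regularity and bounded codegree via counting estimates that follow from the near-regularity of $\tS$ and the linearity inherited from $S$. In either approach, the technical heart is controlling these last-stage extensions.
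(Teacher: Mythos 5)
Your global vertex accounting (total usage $\lesssim |V(P)| + l \le (1+\mu/4)n$ against $|V(\tS)| \ge (1+\mu/2)n$) is correct and matches the paper's Proposition~\ref{prop:r}. You also correctly diagnose that this global slack does not save the greedy BFS: since $\deg_{\tS}(v') \approx n/2$, the lower bound $\deg_{\tS}(v') - |U'|$ on available extensions turns negative once $|U'|$ exceeds roughly $n/2$, long before the packing is complete. The gap is that you do not actually overcome this obstacle. The absorbing plan is stated in one sentence with no construction of $A$, no precise absorbing property, and no argument; note that running the greedy on $V(\tS)\setminus A$ and switching to $A$ at the end faces the identical stalling problem inside $A$. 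The alternative you mention in passing --- a nibble argument on the auxiliary hypergraph whose hyperedges are vertex sets inducing copies of $F$ --- is the route the paper actually takes, applying the Frankl--R\"odl near-perfect matching theorem. But you do not carry out the required degree and codegree estimates (the paper's Claim~\ref{claim:degcodeg}, whose proof has to handle several subtleties: multiple embeddings onto the same vertex set, stray edges creating extra induced copies of $F$, and the automorphism count), and you misdescribe that auxiliary hypergraph as $3$-uniform when it must be $r$-uniform with $r = |V(F)|$. In short, you have the correct reduction and a pointer to the right tool, but not a proof of the claim.
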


Before we establish the claim, observe that the claim immediately implies Lemma~\ref{lemma:matching}.
Indeed by Claim~\ref{claim:forest}, the number of vertex-disjoint copies of $T_i$ in $\tS$ is
$$\frac{l}{{k3^k}} \lambda_i =  \frac{l}{{k3^k}} \Big\lceil \frac{{k3^k}l_i}{l} \Big\rceil \geq l_i$$
for each $i=1,\ldots, t$.
Since $P$ contains exactly $l_i$ vertex-disjoint copies of $T_i$, $P$ is contained in $\tS$.
\end{proof}

\begin{proof}[Proof of Claim~\ref{claim:forest}]
We look for vertex-disjoint embeddings of $F$ in $\tS$.
First, we establish two upper bounds on the size of $\vert V(F) \vert$, which we denote by $r$.
\begin{prop}\label{prop:r}
For $\vert V(F) \vert = r$, the following holds.
\begin{enumerate}
\item $r \leq k(k+4)3^k$, and
\item $r \leq \frac{{k3^k} n}{l}\left( 1+\frac{\mu}{2}\right)$.
\end{enumerate}
\end{prop}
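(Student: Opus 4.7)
The plan is to derive both bounds from one elementary estimate, namely the ceiling inequality
\[
\lambda_i \leq \frac{k3^k l_i}{l} + 1,
\]
combined with two facts already in place: by Lemma~\ref{lemma:sawing}~(\ref{sawing2}) every representative $T_i$ has $|V(T_i)| \leq k$, and by the P\'olya bound quoted above the decomposition $\cP$ uses $t < 3^k$ isomorphism types.

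For (1) I would simply sum the ceiling estimate:
\[
\lambda = \sum_{i=1}^{t}\lambda_i \leq k3^k \cdot \frac{\sum_i l_i}{l} + t = k3^k + t \leq (k+1)3^k,
\]
and then $r = \sum_i \lambda_i|V(T_i)| \leq k\lambda \leq k(k+1)3^k \leq k(k+4)3^k$.

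For (2) I would split the sum according to the two terms of the ceiling bound:
\[
r \leq \frac{k3^k}{l}\sum_i l_i|V(T_i)| + \sum_i |V(T_i)| = \frac{k3^k}{l}|V(P)| + \sum_i|V(T_i)|.
\]
The first sum is the vertex count of the forest $P$, which, after the padding that precedes the lemma adds at most $k3^k$ trivial components, satisfies $|V(P)| \leq n + k3^k$; the second sum is at most $tk \leq k3^k$. Collecting and factoring out $k3^k n/l$, proving (2) reduces to
\[
\frac{k3^k}{n} + \frac{l}{n} \leq \frac{\mu}{2}.
\]
The first summand is $o(1)$ since $d,\mu,k$ are fixed and $n\to\infty$. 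For the second, combining Lemma~\ref{lemma:sawing}~(\ref{sawing1}) and~(\ref{sawing3}) gives $l \leq |I| \leq (2d^2/k)n$, so $l/n \leq 2d^2/k$ (the $k3^k$ padding terms are lower order).

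The one subtlety is calibrating the hierarchy~(\ref{eq:hier}) so that the bound $2d^2/k$ fits inside the $\mu/2$ slack; since both $d$ and $\mu$ are fixed and $1/k \ll \mu$, one may choose $k$ large enough in terms of $d$ and $\mu$ that $2d^2/k \leq \mu/4$, after which (2) follows for $n$ sufficiently large. I expect this bookkeeping — tracking padding, P\'olya's bound, and the hierarchy simultaneously — to be the only step requiring attention; the actual inequalities are one-liners.
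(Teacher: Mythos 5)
Your proposal is correct and follows essentially the same route as the paper: expand the ceiling $\lceil k3^k l_i/l\rceil \leq k3^k l_i/l + 1$, use $|V(T_i)|\leq k$ and $t<3^k$, and for (2) invoke $l \leq |I| \leq (2d^2/k)n$ together with $2d^2/k \leq \mu/2$ from the hierarchy. The one genuine divergence is in (1): the paper bounds $r \leq \frac{k3^k}{l}|V(P)| + k3^k$ and then substitutes $|V(P)| \leq n$ and $l \geq n/(k+3)$ from Lemma~\ref{lemma:sawing}~(\ref{sawing5}), arriving at $k(k+4)3^k$; you instead bound the component count $\lambda \leq k3^k + t < (k+1)3^k$ directly and multiply by $k$, giving the sharper $k(k+1)3^k$. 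Your version has the incidental virtue that it does not pass through the bound $|V(P)|\leq n$, which is in fact slightly loose after the padding step (padding can push $|V(P)|$ a bit above $n$) — a minor looseness the paper glosses over but that your argument sidesteps entirely. For (2) the two proofs are essentially identical; you merely phrase the final step as the explicit reduced inequality $\frac{k3^k}{n} + \frac{l}{n} \leq \frac{\mu}{2}$, whereas the paper first reformulates $l\leq\frac{\mu n}{2}$ as $k3^k \leq \frac{k3^k\mu n}{2l}$ and then factors. Your handling of the padding's effect on $l$ and $|V(P)|$ (both increase by at most $k3^k$, which is lower order) is correct and, if anything, more careful than the paper's.
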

\begin{proof}[Proof of Proposition~\ref{prop:r}]
Recall that $t \leq 3^k$.
By Lemma~\ref{lemma:sawing}, $\vert V(T_i)\vert\leq k$, $i=1,\ldots,t$, and $l = \vert \cP \vert \geq  \frac{n}{k+3}$.
Also, $\vert V(P) \vert < n$.
Then
\begin{align*}
r &= \sum_{i=1}^t \lambda_i \vert V(T_i)\vert = \sum_{i=1}^t \Big\lceil \frac{{k3^k} l_i}{l}\Big\rceil \vert V(T_i)\vert \leq \frac{{k3^k}}{l}\sum_{i=1}^t \ l_i\vert V(T_i)\vert + \sum_{i=1}^t \vert V(T_i)\vert\\
&\leq \frac{{k3^k}}{l} \sum_{i=1}^l\vert V(P_i)\vert + kt < \frac{{k3^k}}{l}\vert V(P)\vert + k3^k \leq \frac{k3^k}{n/(k+3)}n + k3^k \\
&\leq k(k+4)3^k,
\end{align*}
which proves $(1)$ of the proposition.

For the proof of $(2)$, we will first observe that $l \leq \frac{\mu n}{2}$ or equivalently that ${k3^k} \leq \frac{{k3^k} \mu n}{2 l}$.
This follows from~(\ref{sawing1}) and~(\ref{sawing3}) of Lemma~\ref{lemma:sawing}, which implies that $l \leq \frac{2d^2}{k}n$ and from the hierarchy Inequality~\ref{eq:hier} by which
$$\frac{2d^2}{k} \leq \frac{\mu}{2}$$
for $k \geq k_0(\mu, d)$.
Now applying in part our estimate from the proof of $(1)$ above, we infer that
\begin{align*}
r &< \frac{{k3^k}}{l}\vert V(P)\vert + k3^k \leq \frac{{k3^k}}{l}n + \frac{{k3^k} \mu n}{2 l}
\leq  \frac{{k3^k} n}{l} \left( 1 + \frac{\mu }{2}\right).
\end{align*}
\end{proof}

Now consider an auxiliary hypergraph $A$ so that
\begin{equation*}\tag{$\star$}
  \begin{array}{ccl}
    V(A) & = & V(\tilde{S}),\text{ and}\\
    E(A) & = & \{R \in \binom{V(\tS)}{r}: \tS[R] \text{ contains a copy of } F \}.
  \end{array}
\end{equation*}
In order to find vertex-disjoint copies of $F$ in $\tS$, we look for a matching in $A$.
To that end, we wish to apply the following theorem, where the co-degree $deg_A(x,y)$ of any $x,y \in V(A)$ is the number of edges shared by both $x$ and $y$.

\begin{theorem}[\cite{FR}]\label{thm:FR}
Suppose $A$ is an $r$-uniform hypergraph on $V$ which, for some $D >1$, has the following two properties:
\begin{enumerate}
\item $deg_A(x) = D(1+o(1))$ for all $x \in V$, where $o(1)\to 0$ as $\vert V \vert \to \infty$.
\item $deg_A(x,y) < D/(\log \vert V \vert)^4$ for all $x,y \in V$.
\end{enumerate}
Then $A$ contains at least $\frac{\vert V \vert(1-o(1))}{r}$ pairwise disjoint edges.
\end{theorem}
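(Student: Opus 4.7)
The plan is to prove Theorem~\ref{thm:FR} by the \emph{semi-random method} (R\"odl's nibble): construct the near-perfect matching as a union of many partial matchings, each produced by a randomized round on the current residual hypergraph. Fix a small constant $\gamma > 0$, set $V_1 = V$ and $A_1 = A$, and iterate $N$ rounds as follows. In round $i$, independently activate each edge of $A_i$ with probability $p_i = \gamma / D_i$, where $D_i$ is the current (near-uniform) degree. Let $M_i$ consist of those activated edges that share no vertex with any other activated edge (so $M_i$ is automatically a matching), and set $V_{i+1} = V_i \setminus V(M_i)$ and $A_{i+1} = A_i[V_{i+1}]$, after additionally deleting a negligible set of ``bad'' edges whose codegree has spiked. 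The inductive hypothesis to maintain is that $A_i$ is nearly $D_i$-regular on $V_i$ with codegree $\ll D_i / (\log |V_i|)^4$, with targets $|V_{i+1}| \sim (1-\gamma)|V_i|$ and $D_{i+1} \sim D_i (1-\gamma)^{r-1}$.

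The expected versions of these targets are immediate from the independence of activations: a fixed vertex is covered by $M_i$ with probability $\gamma (1 - o(1))$; a fixed edge $e \in A_i$ survives (every vertex of $e$ remains uncovered) with probability $(1-\gamma)^r (1 + o(1))$; and summing over the edges through a fixed vertex $x$ gives $\mathbb{E}[\deg_{A_{i+1}}(x)] = D_i (1-\gamma)^{r-1} (1+o(1))$.

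The main obstacle---and the only use of hypothesis (2)---is \textbf{concentration}: one must show that $|V_{i+1}|$, every $\deg_{A_{i+1}}(x)$, and every $\deg_{A_{i+1}}(x,y)$ track their expectations closely enough that a union bound over all $O(|V|^2)$ pairs and all $N$ rounds keeps the total failure probability strictly less than $1$. Here is where the codegree bound enters. The random variable $\deg_{A_{i+1}}(x)$ is a sum of weakly dependent indicators, one per edge $e \ni x$ in $A_i$; a single edge-activation in the nibble changes this sum by at most the codegree of $x$ with that edge, i.e.\ by $O(D_i / (\log |V_i|)^4)$. An edge-exposure martingale together with Azuma--Hoeffding then yields deviations $o(D_i)$ with probability $1 - \exp\bigl(-(\log |V_i|)^{\Omega(1)}\bigr)$, which easily survives the union bound. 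The same martingale machinery controls new codegrees after the small deletion step, so the induction passes cleanly from round $i$ to round $i+1$ with only a slightly worse $o(1)$ in hypothesis (1).

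After $N$ rounds the matching $M_1 \cup \cdots \cup M_N$ covers all but a $((1-\gamma)^N + o(1))$-fraction of $V$, producing at least $\frac{|V|(1 - (1-\gamma)^N - o(1))}{r}$ disjoint edges of $A$. Letting $\gamma \to 0$ and $N \to \infty$ sufficiently slowly in $|V|$ (so that $(1-\gamma)^N \to 0$ and the accumulated $o(1)$ errors from each round remain negligible) yields the required bound $\frac{|V|(1 - o(1))}{r}$.
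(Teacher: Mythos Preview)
The paper does not prove Theorem~\ref{thm:FR}; it is quoted from~\cite{FR} and used as a black box (with a remark that later refinements such as~\cite{AKS},~\cite{KK},~\cite{PS} exist). So there is no ``paper's own proof'' to compare against, and your task was really to supply one.

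Your high-level plan is the right one: this theorem is the archetypal application of the R\"odl nibble, and the iteration you describe (activate each edge with probability $\gamma/D_i$, keep the isolated activated edges as a matching, pass to the uncovered vertices, track the decay $|V_{i+1}|\sim(1-\gamma)|V_i|$ and $D_{i+1}\sim(1-\gamma)^{r-1}D_i$) is exactly the skeleton of the Frankl--R\"odl/Pippenger--Spencer argument.

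There is, however, a real gap in your concentration step. You assert that an edge-exposure martingale with Azuma gives $\Pr(|\deg_{A_{i+1}}(x)-\mathbb E\deg_{A_{i+1}}(x)|>\delta D_i)\le \exp(-(\log|V_i|)^{\Omega(1)})$, using that flipping one activation changes $\deg_{A_{i+1}}(x)$ by at most $L\sim rD_i/(\log|V_i|)^4$. But the number of coordinates that can affect $\deg_{A_{i+1}}(x)$ is of order $rD_i^2$ (every edge meeting the link of $x$), so
\[
\sum_f L_f^2 \;\le\; (\max_f L_f)\cdot\sum_f L_f \;\lesssim\; \frac{rD_i}{(\log|V_i|)^4}\cdot rD_i^2 \;=\; \frac{r^2 D_i^3}{(\log|V_i|)^4},
\]
and Azuma yields only $\exp\bigl(-c\,(\log|V_i|)^4/D_i\bigr)$, which is useless once $D_i$ exceeds a polylog. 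No reshuffling of the martingale (restricting to relevant edges, vertex exposure, etc.) rescues a plain bounded-differences bound here: the sum of squared Lipschitz constants is genuinely of order $D_i^3/\mathrm{polylog}$.

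The standard cure, and the one actually used in~\cite{FR} and in textbook treatments, is the \emph{defect-form} nibble: one does not prove pointwise concentration of every degree. Instead one computes $\mathrm{Var}(\deg_{A_{i+1}}(x))$ using the codegree hypothesis (this is where $D/(\log|V|)^4$ enters), applies Chebyshev to show that all but an $o(1)$-fraction of vertices have degree $(1+o(1))D_{i+1}$, deletes the small set of bad vertices, and checks that this deletion perturbs the remaining degrees negligibly. This avoids any union bound over $|V|$ vertices against a sub-polynomial tail. Alternatively one can invoke a stronger concentration inequality tailored to low-variance polynomials of independent variables (e.g.\ Kim--Vu), but that is heavier machinery than the theorem needs. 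Either fix completes your outline into a correct proof.
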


\begin{note}\label{pipspenc}
This theorem was subsequently extended and improved in a number of papers (e.g.~\cite{AKS}, \cite{KK}, \cite{PS}), but for the purposes here, it is sufficient to use this form.
\end{note}

We will show the following result (which is proved on the next page), where
$$f=\vert E(F) \vert.$$
\begin{claim}\label{claim:degcodeg}
There exists a constant $c=c(F)$ such that
$$D = c \vert V(A) \vert^{\lambda+f-1}$$
satisfies $(1)$ and $(2)$ of Theorem~\ref{thm:FR} for the auxiliary hypergraph $A$ defined in $(\star)$.
\end{claim}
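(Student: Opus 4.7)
The plan is to estimate $\deg_A(x)$ and $\deg_A(x,y)$ by counting labeled embeddings of $F$ into $\tS$ whose images are forced to contain the distinguished vertices, and then to convert these counts to $r$-set counts via a bounded multiplicity factor that depends only on $F$.

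First I would count labeled copies of $F$ in $\tS$ containing $x$ by processing the $\lambda$ component hypertrees $T_1,\ldots,T_\lambda$ one at a time and, within each component, adding hyperedges in a rooted order. The crucial input from Lemma~\ref{lemma:random} is that every vertex $a \in V(\tS)$ satisfies $\deg_{\tS}(a) \sim (1-\epsilon)^2 m/2$, so each new hyperedge extending from an already-embedded vertex $a$ offers $\sim (1-\epsilon)^2 m$ ordered choices for the two new vertices (up to a bounded edge-automorphism factor). Since each $T_j$ has $v_j = 2e_j+1$ vertices, rooting $T_j$ and carrying out $e_j$ edge-extensions yields $\sim c_j m^{1+e_j}$ embeddings of $T_j$ for a structural constant $c_j = c_j(T_j)$. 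Multiplying over $j$ gives $\sim c(F)\,m^{\lambda + f}$ embeddings of $F$ in $\tS$; restricting to those whose image contains $x$ costs an additional factor of order $r/m$ (summing over which vertex of $F$ is mapped to $x$), producing $\sim c'(F)\,m^{\lambda + f - 1}$ embeddings through $x$. Finally, each $r$-set $R$ with $\tS[R] \supseteq F$ supports only $O_F(1)$ labeled copies of $F$, so dividing yields $\deg_A(x) = c\,|V(A)|^{\lambda+f-1}(1+o(1))$ for a constant $c = c(F)$ independent of $x$.

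For the co-degree I would sum over ordered pairs $(v,v') \in V(F)^2$ the number of labeled embeddings of $F$ that send $v \mapsto x$ and $v' \mapsto y$. If $v$ and $v'$ lie in different components of $F$, each of those two components is rooted independently and loses one $m$-factor relative to the free count, which gives $O_F(m^{\lambda + f - 2})$. If $v$ and $v'$ lie in the same component $T_a$, the unique $v$-to-$v'$ path in $T_a$ must be realized by a hypergraph path of length $p$ from $x$ to $y$ in $\tS$; here I would exploit the Steiner triple property of the ambient $S$ to bound the number of such paths by $O(m^{p-1})$ (the final pair on the path forces its closing edge via the unique common triple), after which the remaining $e_a - p$ edges of $T_a$ and the other components of $F$ are embedded freely, each contributing $\sim m$ per edge. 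Both cases produce the same bound $O_F(m^{\lambda + f - 2})$, so $\deg_A(x,y) = O(D/m)$, which is comfortably below $D/(\log |V(A)|)^4$ for large $m$.

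The routine bookkeeping in the degree estimate is straightforward but requires care to verify that the $O(1)$ already-used vertices at each edge-extension step cost only a $(1+o(1))$ multiplicative loss; here the Steiner triple property pays off, since any fixed forbidden vertex can collide with at most one edge through $a$. The main obstacle I anticipate is the same-component co-degree case, where the near-regularity of $\tS$ alone does not rule out an abnormally large number of short hypergraph paths between a specific pair of vertices, so one must lean on the Steiner triple structure of the ambient $S$ itself to obtain the needed $O(m^{p-1})$ bound on paths from $x$ to $y$. Once these two points are settled, Claim~\ref{claim:degcodeg} is verified, and Theorem~\ref{thm:FR} applies to produce the packing asserted in Claim~\ref{claim:forest}.
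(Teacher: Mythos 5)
Your overall route coincides with the paper's: you count labeled embeddings of $F$ into $\tS$ (processing components one by one, edges in a rooted order, and using the near-regularity $\deg_{\tS}(v) \sim (1-\epsilon)^2 m/2$ to give $\sim (1-\epsilon)|V(\tS)|$ ordered choices per edge), obtain $\sim c'(F)\,|V(\tS)|^{\lambda+f-1}$ embeddings through a fixed vertex $x$, and then attempt to convert to $r$-set counts. For the co-degree you split into the two cases (same component / different component) exactly as the paper does, and in the same-component case you use the Steiner property to save an $m$-factor; the paper phrases this as the edge $e_j$ having its third vertex forced once two are fixed, rather than counting hypergraph paths, but these are the same mechanism. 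So the skeleton is right.

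The gap is in the step ``each $r$-set $R$ with $\tS[R]\supseteq F$ supports only $O_F(1)$ labeled copies of $F$, so dividing yields $\deg_A(x) = c|V(A)|^{\lambda+f-1}(1+o(1))$.'' Theorem~\ref{thm:FR}(1) requires a \emph{single} constant $D$ with $\deg_A(x) = D(1+o(1))$ for every $x$. Knowing only that the embedding-to-$r$-set multiplicity is bounded above by a constant $M=M(F)$ gives a sandwich $|E_x|/M \le \deg_A(x) \le |E_x|/|\mathrm{Aut}(F)|$, which does not pin down a consistent asymptotic constant; a priori the multiplicity could fluctuate between $r$-sets, and even between values of $x$. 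To close this you must show, as the paper does in Claim~\ref{Rxsmall}, that the set of embeddings $\psi$ for which $\tS[\psi(V(F))]$ contains an \emph{unwanted triple} (an edge of $\tS$ on $\psi(V(F))$ not in $\psi(E(F))$) is a negligible fraction of $E_x$. Once that is established, the multiplicity is $(1+o(1))$-almost always exactly $|\mathrm{Aut}(F)|$, and one obtains $\deg_A(x) \sim |E_x|/|\mathrm{Aut}(F)|$ with a constant $c = r(1-\epsilon)^f/|\mathrm{Aut}(F)|$ independent of $x$. The argument is the same edge-by-edge embedding count, but at each step one additionally excludes the $O(r^2)$ vertices that would close an unwanted triple (using the Steiner property to see that each pair of already-placed vertices forbids exactly one vertex), which is a $(1-o(1))$ correction since $r$ is bounded and $|V(\tS)| \to \infty$. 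Add this, and your proof matches the paper's.
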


Thus by Theorem~\ref{thm:FR}, there is a nearly perfect matching in $A$, implying a nearly perfect packing of vertex-disjoint copies of $F$ in $\tS$.
Specifically Theorem~\ref{thm:FR} says one can pack at least
$$\frac{\vert V(A) \vert (1-o(1))}{r}$$
vertex-disjoint copies of $F$ into $\tS$.

To prove Claim~\ref{claim:forest}, it remains to show that
$$\frac{\vert V(A) \vert (1-o(1))}{r} \geq \frac{l}{{k3^k}}.$$
Recall that $\epsilon \ll \mu$ and $\vert V(A) \vert \geq (1-\epsilon)(1+\mu)n$.
By part (2) of Proposition~\ref{prop:r}, it follows that
$$\frac{\vert V(A) \vert(1-o(1))}{r} \geq \frac{(1-\epsilon)(1+\mu) n (1-o(1))}{\frac{{k3^k} n}{l}\left( 1+\frac{\mu}{2}\right)} \geq \frac{(1+\frac{\mu}{2}) n }{\frac{{k3^k} n}{l}\left( 1+\frac{\mu}{2}\right)} = \frac{l}{{k3^k}}.$$
Therefore there are $\frac{l}{{k3^k}}$ vertex-disjoint copies of $F$ in $\tS$.
\end{proof}

For the proof of Claim~\ref{claim:degcodeg}, we formalize the definition of an embedding.
\begin{definition}
Let $V(F) = \{1,2,\ldots, r\}$ and let $R\subset V(\tS)$ be a subset of $V(\tS)$ with labeled vertices $\{v_1, v_2, \ldots, v_r\}$.
We say a function $\psi:F\to \tS$ is an \emph{embedding} of $F$ into $\tS$ if $\psi(i)=v_i$ for $i=1, \ldots, r$ and if for all $\{i,j,h\} \in E(F)$, $\{v_i, v_j, v_h\} \in E(\tS)$.
\end{definition}

\begin{proof}[Proof of Claim~\ref{claim:degcodeg}]
To count $\deg_A(x)$ for any $x\in V(A)$, we first count the number of embeddings from $F$ to $\tS$ that map some vertex in $V(F)$ to $x$.
Fix a labeling $\{1,2,\ldots, r\}$ of $V(F)$ and fix any $x\in V(\tS)$.
Then let
$$E_x = \{\psi: F\to \tS \text{ such that there is some }i\in V(F) \text{ with }  \psi(i)=x\}$$
be the set of all embeddings of $F$ into $\tS$ where $x$ is in the image of $\psi$.
Let
\begin{equation}\label{degAx}
D_x = \{ R\in \binom{V(\tS)}{r}: \text{ there exist } \psi\in E_x \text{ with } \psi(V(F)) = R\}
\end{equation}
and see that
$$deg_A(x) = \vert D_x \vert.$$

In order to determine $\vert D_x \vert$, we will find the cardinality of $E_x$.
With $i$ fixed, consider all embeddings $\psi: F\to \tS$ with $i\to x$.
For simplicity, first consider the case that $F$ consists of a single tree.
Consider an ordering of the edges $e_t\in E(F)$, $t= 1, \ldots, f$, satisfying 
\begin{equation}\label{edgeorder}
  \begin{array}{c}
    e_1 = \{i,j,h\} \text{ for some vertices } j \text{ and } h, \text{ and }\\
    \vert e_{t+1} \cap \left( \bigcup_{s\leq t} e_s \right) \vert =1.
  \end{array}
\end{equation}
Recall that by assumption, for all $v\in V(F)$,
$$deg_{\tS} (v) \sim (1 -\epsilon)^2 \frac{m}{2} \sim (1-\epsilon)\frac{\vert V(\tS) \vert}{2}.$$
For edge $e_1$, there are $(1-\epsilon)\frac{\vert V(\tS) \vert}{2}$ edges incident to $x$ in $\tS$ onto which to map $e_1$.
Choosing one of these edges, say $\{x,y_1, y_2\}$, there are two ways to map $\{i,j,h\}$ to $\{x,y_1, y_2\}$ with $i \to x$.
Therefore there are $(1-\epsilon)\vert V(\tS) \vert$ ways to map $e_1$ into $\tS$.
For $t>1$, one vertex of $e_t$ has already been mapped into $\tS$.
Consequently, similarly as in the $t=1$ case, there are $(1-o(1))(1-\epsilon)\vert V(\tS) \vert$ ways to embed $e_t$ for $t=2, \ldots, f$ into $\tS$.
Therefore we have $(1-o(1))(1-\epsilon)^f \vert V(\tS) \vert^f$
embeddings with $i\to x$.
Since $i$ can be chosen in $r= \vert V(F) \vert$ ways, if $F$ is a simple tree,
\begin{equation}\label{Extree}
\vert E_x \vert \sim r(1-\epsilon)^f \vert V(\tS) \vert^f.
\end{equation}

Now consider $F$ as a forest of $\lambda$ disjoint trees.
As above, map some vertex $i$ to $x$ and embed all of the edges in the same component of $i$ into $\tS$.
For each of the other $\lambda-1$ components of $F$, embed one of its vertices to some unused vertex in $\tS$.  There are
\begin{equation}\label{embedseeds}
(1-o(1))\vert V(\tS)\vert^{\lambda-1}
\end{equation}
ways to choose these vertices.
As with the first component, map the rest of the edges of $F$ into $\tS$ to form an embedding of $V(F)$ to some subset $R \subset V(\tS)$.
Combining~\ref{Extree} and~\ref{embedseeds}, we infer that
\begin{equation}\label{sizeEx}
\vert E_x \vert \sim r(1-\epsilon)^f \vert V(\tS) \vert^{f+\lambda-1}.
\end{equation}

Some embeddings $\psi$ in $E_x$ may map onto the same vertex sets but different edge sets in $\tS$.
In order to find $deg_A(x)$, we make sure that each vertex set $R$ with $x\in R$ inducing a copy of $F$ is counted precisely once.
To this end, let
$$R_x = \{ \psi \in E_x: \text{ there exists } \psi'\in E_x \text{ with } \psi(V(F))=\psi'(V(F))\text{ but } \psi(E(F)) \neq\psi'(E(F))\}.$$
For any $\psi, \psi' \in R_x$, there must be some edge in $\tS[\psi(V(F))]$ not in $\psi(E(F))$, because this edge is in $\psi'(E(F))$.
In the following claim, we show that $R_x$ makes up a small portion of $E_x$ by counting how many embeddings in $E_x$ induce no extra edge in $\tS$. 
This implies that the number of embeddings $\psi$ for which there is a $\psi'$ with the property above is negligible.

\begin{claim}\label{Rxsmall}
$$\vert E_x \setminus R_x \vert \sim \vert E_x \vert$$
\end{claim}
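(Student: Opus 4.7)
The plan is to bound $|R_x|$ by the same edge-by-edge counting that produced (\ref{sizeEx}) for $|E_x|$, with one extra constraint. First, observe that a map $\psi \in E_x$ belongs to $R_x$ precisely when the induced sub-hypergraph $\tS[\psi(V(F))]$ contains some edge of $\tS$ not in $\psi(E(F))$. Equivalently, there is a $3$-subset $T = \{a,b,c\} \subseteq V(F)$ with $T \notin E(F)$ such that $\{\psi(a),\psi(b),\psi(c)\} \in E(\tS)$. Hence $R_x \subseteq \bigcup_T B_T$, where $B_T = \{\psi \in E_x : \{\psi(a),\psi(b),\psi(c)\} \in E(\tS)\}$ and the union runs over all non-edge triples. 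By Proposition~\ref{prop:r}(1) the quantity $r = |V(F)|$ is bounded by $k(k+4)3^k$, independently of $m$, so there are only $O(1)$ such triples.

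For a fixed $T$ and fixed $i \in V(F)$ with $\psi(i)=x$, I would count embeddings step-by-step in the same order used to derive (\ref{sizeEx}): first extend from $x$ across the edges of the component of $F$ containing $i$, then for each of the remaining $\lambda-1$ components choose a seed vertex and extend outward. Each edge extension contributes a factor $(1-o(1))(1-\epsilon)|V(\tS)|$, each seed a factor $(1-o(1))|V(\tS)|$. The key move is to focus on the last step at which a vertex of $T$ is placed: since $T \notin E(F)$, this step places either exactly one or exactly two vertices of $T$. In the one-vertex case, say the new vertex is $c$ while $\psi(a)$ and $\psi(b)$ are already fixed, the requirement $\{\psi(a),\psi(b),\psi(c)\} \in E(\tS) \subseteq E(S)$ pins $\psi(c)$ to the unique third vertex of the $S$-edge through $\{\psi(a),\psi(b)\}$, reducing the $\Theta(|V(\tS)|)$ freedom at that step to at most one choice. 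In the two-vertex case, where $b$ and $c$ are the two new vertices of some edge $\{p,b,c\} \in E(F)$ with $p\neq a$ (since otherwise $T=\{p,b,c\}\in E(F)$), the edges $\{\psi(p),\psi(b),\psi(c)\}$ and $\{\psi(a),\psi(b),\psi(c)\}$ would share the pair $\{\psi(b),\psi(c)\}$, which in the Steiner triple system $S$ forces $\psi(p)=\psi(a)$, contradicting injectivity; so this case contributes nothing. Either way, the constraint removes a full factor of $|V(\tS)|$, yielding the uniform estimate $|B_T| = O(|V(\tS)|^{f+\lambda-2})$.

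Summing over the $r$ choices of $i$ and the $O(r^3)$ triples $T$ (both constants in $m$), I obtain
\[|R_x| \;\leq\; \sum_T |B_T| \;=\; O\!\left(|V(\tS)|^{f+\lambda-2}\right) \;=\; o\!\left(|V(\tS)|^{f+\lambda-1}\right),\]
which by (\ref{sizeEx}) is $o(|E_x|)$, proving $|E_x \setminus R_x| \sim |E_x|$. The main obstacle is the two-vertex case: one must verify that the Steiner-triple-system axiom is applied to the \emph{ambient} system $S$ (not merely to $\tS$, which is only induced and may be missing edges), and that the hypothesis $T \notin E(F)$ is used in exactly the right place to guarantee $p \neq a$ so that the forced equality $\psi(p)=\psi(a)$ is genuinely a contradiction rather than a tautology.
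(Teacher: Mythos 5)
Your proposal is correct, but takes a genuinely different route from the paper. The paper counts the good set $\vert E_x \setminus R_x\vert$ directly: it re-runs the edge-by-edge construction used to compute $\vert E_x\vert$, but at each step excludes, for every pair $\{a_1,a_2\}$ of already-placed vertices of $F$, the unique vertex $b\in V(\tS)$ with $\{\psi(a_1),\psi(a_2),b\}\in E(\tS)$; this forbids at most $\binom{r}{2}$ vertices per step, and since $r$ is bounded by Proposition~\ref{prop:r} while $\vert V(\tS)\vert\to\infty$, each step still offers $(1-o(1))(1-\epsilon)\vert V(\tS)\vert$ choices, yielding $\vert E_x\setminus R_x\vert\sim\vert E_x\vert$. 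You instead bound the bad set $\vert R_x\vert$ from above by a union bound over the $O(r^3)$ non-edge triples $T\subset V(F)$, observing that the constraint $\{\psi(a),\psi(b),\psi(c)\}\in E(\tS)$ kills exactly one factor of $\vert V(\tS)\vert$ at the last step placing a vertex of $T$; your disposal of the two-vertex case (the pair $\{\psi(b),\psi(c)\}$ would lie in two edges of $S$, forcing $\psi(p)=\psi(a)$, impossible since $\psi$ is a bijection onto a set of $r$ distinct vertices and $p\neq a$ precisely because $T\notin E(F)$) is right. Both arguments rest on the same underlying fact that a pair of vertices determines at most one edge of a Steiner triple system; the paper amortizes the exclusion over the step-by-step build, while you isolate the obstruction as a single bad event per non-edge triple, which is arguably cleaner because it makes the lost factor of $\vert V(\tS)\vert$ explicit rather than hiding it in a $(1-o(1))$ at every step. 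Your closing concern about applying the Steiner axiom in $S$ rather than in $\tS$ is resolved without extra care: $\tS$ is an induced subhypergraph of $S$, so $E(\tS)\subseteq E(S)$ and any two edges of $\tS$ through a common pair would already violate the Steiner property of $S$; hence $\tS$ is itself linear and the argument may be read in either hypergraph.
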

\begin{proof}
The argument will be similar to that of the proof of \ref{sizeEx} with one additional constraint.
In order to determine $\vert E_x \setminus R_x \vert$, fix $i$ and consider all $\psi\in E_x \setminus R_x$ with $i\to x$.
First consider the case that $F$ consists of a single tree.
As before, consider an ordering of the edges $e_t\in E(F)$ as in \ref{edgeorder}.
For edge $e_1$, there are $(1-\epsilon)\vert V(\tS) \vert$ ways to map $e_1$ into $\tS$ with $i\to x$.
For $t>1$, similarly as before, one vertex of $e_t$ has already been mapped into $\tS$.
To choose which two vertices in $\tS$ to map the other two vertices of $e_t$ to, we have to avoid creating an \emph{unwanted triple} in the image of $F$:
i.e. a triple $\{ a_1, a_2, a_3 \} \subset V(F)$ with $\{ a_1, a_2, a_3 \} \not\in E(F)$ while $\{ \psi(a_1), \psi(a_2), \psi(a_3) \}\in E(\tS)$.
By avoiding unwanted edges, $\tS[\psi(V(F))]$ will equal $\psi(E(F))$.
For any two vertices $a_1, a_2$ with
$$a_1,a_2 \in \bigcup_{s=1}^{t-1}e_s,$$
these two vertices have already been mapped into $\tS$ with the first $t-1$ edges.
We need to select the image of the remaining two vertices of $e_t$ different from the vertex $b\in V(\tS)$ for which $\{ \psi(a_1), \psi(a_2), b\} \in E(\tS)$.
Since there are at most $\binom{r}{2}$ pairs $a_1, a_2$, at most $\binom{r}{2}$ vertices $b$ in $\tS$ are forbidden to be selected.
Since by Proposition~\ref{prop:r} $r \leq k(k+4)3^k \ll \vert V(\tS) \vert$, there are still $(1-o(1))(1-\epsilon)\vert V(\tS) \vert$ ways to embed $e_t$ for $t=2,\ldots, f$ into $\tS$.
As before, since $i$ can be chosen in $r$ ways, if $F$ is a tree,
$$\vert E_x \setminus R_x \vert \sim r(1-\epsilon)^f \vert V(\tS) \vert^f.$$

If $F$ is a forest, then again proceed as before.
When mapping the first vertex of each component into $\tS$, we still must avoid creating unwanted edges in $\tS$.
The number of vertices that are forbidden to be selected here is still small though, because the number of components $\lambda \ll \vert V(\tS) \vert$.
Then as before, if $F$ is a forest,
$$\vert E_x \setminus R_x \vert \sim r(1-\epsilon)^f \vert V(\tS) \vert^{f+\lambda-1} \sim \vert E_x \vert.$$
\end{proof}

It still may be that for two distinct embeddings $\psi,\psi'\in E_x\setminus R_x$, the images $\psi(V(F))=\psi'(V(F))$ and $\psi(E(F))=\psi'(E(F))$.
Fix any copy of $F$ in $\tS$.
For each labeling of $V(F)$ that gives an automorphism of $F$, there is a distinct embedding $\psi\in E_x$ onto the copy of $F$.
Let the number of hypergraph automorphisms of $F$ be called $\vert Aut(F)\vert$.
Then there are $\vert Aut(F)\vert$ distinct embeddings in $E_x$ onto any fixed copy of $F$.
Since $D_x$ counts unlabeled sets containing $x$ that induce a copy of $F$, we infer that
$$\frac{\vert E_x\setminus R_x \vert}{\vert Aut(F) \vert} \leq \vert D_x \vert \leq \frac{\vert E_x\vert}{\vert Aut(F) \vert}.$$
This along with Claim~\ref{Rxsmall} and Equation~\ref{sizeEx} implies that
$$deg_A(x) = \vert D_x \vert \sim \frac{r(1-\epsilon)^f \vert V(\tS) \vert^{f+\lambda-1}}{\vert Aut(F) \vert},$$
proving $(1)$ of Claim~\ref{claim:degcodeg} with 
$$c = \frac{r(1-\epsilon)^f}{\vert Aut(F) \vert}.$$

To find $deg_{A}(x,y)$ for any $x,y \in V(A)$, proceed as before.
Let
$$E_{x,y} = \{\psi: F\to \tS \text{ such that there are some }i,j\in V(F) \text{ with }  \psi(i)=x \text{ and } \psi(j)=y\}$$
be the set of all embeddings of $F$ into $\tS$ with $i\to x$ and $j\to y$.
Let
\begin{equation}\label{degAx}
D_{x,y} = \{ R\in \binom{V(\tS)}{r}: \text{ there exist } \psi\in E_{x,y} \text{ with } \psi(V(F)) = R\}
\end{equation}
and see that
$$deg_A(x,y) = \vert D_{x,y} \vert.$$

To find the cardinality of $E_{x,y}$, we follow the same procedure as for $E_{x}$, except that some vertex in $F$ must be mapped to $y$ by all $\psi \in E_{x,y}$.
Fix some $i,j\in V(F)$ and first count the embeddings  $\psi \in E_{x,y}$ with $i\to x$ and $j\to y$.
Consider two possible cases.

\begin{enumerate}
\item[Case 1)] Suppose $j$ is in the same component of $F$ as $i$.
Call the component $C_i$.
As before, give an ordering to the edges $e_t\in E(C_i)$ such that
$$e_1 = \{i,g,h\} \text{ for some vertices } g \text{ and } h \text{, and}$$
$$\big \vert e_{t+1} \cap \left( \bigcup_{s\leq t} e_s \right) \big\vert =1.$$
Let $e_j$ be the first edge in the order that contains $j$ as a vertex, so $e_j = \{ a,b,j\}$ for some vertices $a$ and $b$.
For edge $e_1$, as before, there are $(1-\epsilon)\vert V(\tS) \vert$ ways to map $e_1$ into $\tS$ with $i\to x$.
For $2 \leq t < j$, similarly as before, there are $(1-o(1)(1-\epsilon)\vert V(\tS) \vert$ ways to map $e_t$ into $\tS$.
For $e_j$, either $a$ or $b$ has already been assigned an image in $\tS$, and since $j\to y$, there is only one way to map $e_j$ into $\tS$.
For $e_t$, $t>j$, once again there are $(1-o(1)(1-\epsilon)\vert V(\tS) \vert$ ways to map $e_t$ into $\tS$.
Embed the other $\lambda-1$ components of $F$ into $\tS$ as before, so all $f$ edges are mapped into $\tS$.
Then there are $(1-o(1))(1-\epsilon)^{f-1} \vert V(\tS) \vert^{f+\lambda-2}$
embeddings with $i\to x$ and $j\to y$.
Since $i$ and $j$ can be chosen in $r(r-1)$ ways,
$$
\vert E_{x,y} \vert \sim r^2(1-\epsilon)^{f-1} \vert V(\tS) \vert^{f+\lambda-2}.
$$

\item[Case 2)] Suppose $j$ is not in the same component of $F$ as $i$.
Map the component containing $i$ as before, with $i\to x$.
Next map the component containing $j$ in the same way, with $j\to x$.
For each of the other $\lambda-2$ components of $F$, embed one of its vertices to some unused vertex in $\tS$.
There are
\begin{equation}\label{embedseedsnoty}
(1-o(1))\vert V(\tS)\vert^{\lambda-2}
\end{equation}
ways to choose these vertices.
As with the first two components, map the rest of the edges of $F$ into $\tS$ to form an embedding of $F$.
Since each edge of $F$ can be mapped to $\tS$ in $(1-o(1))(1-\epsilon)\vert V(\tS) \vert$ ways, and since there are $r(r-1)$ ways to choose $i$ and $j$, applying~\ref{embedseedsnoty} we infer that
\begin{equation}
\vert E_x \vert \sim r^2(1-\epsilon)^{f} \vert V(\tS) \vert^{f+\lambda-2}.
\end{equation}
\end{enumerate}

In either case,
\begin{equation}\label{sizeExy}
\vert E_{x,y} \vert \lesssim r^2(1-\epsilon)^{f-1} \vert V(\tS) \vert^{f+\lambda-2}.
\end{equation}
By the same argument that showed
$$\vert D_x \vert \sim \frac{r(1-\epsilon)^f \vert V(\tS) \vert^{f+\lambda-1}}{\vert Aut(F) \vert},$$
it can be shown that
$$ deg_A(x,y) = \vert D_{x,y} \vert \lesssim \frac{r^2(1-\epsilon)^{f-1} \vert V(\tS) \vert^{f+\lambda-2}}{\vert Aut(F) \vert}.$$
Since $\vert V(\tS) \vert = \vert V(A) \vert \sim (1-\epsilon)m$ by assumption, and since $r$ is bounded by a fixed value by Proposition~\ref{prop:r}, 
$$deg_A(x,y) < \frac{c \vert V(A) \vert^{f+\lambda+1}}{(\log \vert V(A) \vert)^4}$$
for the same value of $c$ as above.
This proves $(2)$ of Claim~\ref{claim:degcodeg}.
\end{proof}

\section{Proof of Theorem~\ref{thm:main}}

\begin{proof}\label{proof:main}
Let $T$ by any subdivision tree on $n$ vertices of bounded degree $d$ and let $S$ be any Steiner triple system on $m \geq n(1+\mu)$ vertices, where $n$ is large.
Fix constants $\epsilon$ and $k$ so that they fit in the hierarchy described in Inequality \ref{eq:hier}.
Specifically, choose $k$ so that
\begin{equation}\label{sizek}
k \geq \frac{8d^5}{\epsilon^{d+1}}.
\end{equation}

First, recall that Lemma~\ref{lemma:sawing} guarantees a decomposition of $T$ into families with certain properties.
Namely, $T$ is decomposed into a set $\cP$ of subhypertrees, a set $\cE$ of stars, and a set $I$ of independent vertices.

Second, recall that Lemma~\ref{lemma:random} guarantees a subset $R \subset V(S)$ called the \emph{reservoir} with certain properties.
Let
$$\tS = S[V(S) \setminus R].$$

Third, let $P$ be as defined in Equation~\ref{Psum}.
Lemma~\ref{lemma:matching} guarantees that there is a copy of $P$ in $\tS$, so that the hypergraph embedding
$$f: P \to \tS$$
exists.

Finally, it remains to show that $\cE$ and $I$ can be embedded in $S$ in such a way that the original configuration of $T$ is restored.
Recall that a star $E_j \in \cE$ has the form
$$E_j = \{ v_{j,i}, w_{j,i}, u_j\} \text{ where } 1\leq i \leq {c_j} = deg(u_j).$$
To simplify the notation throughout the rest of the proof, we will drop the index $j$ and will write $v_i = v_{j,i}$, $w_i = w_{j,i}$, $u = u_j$, and $c=c_j$ when referring to vertices of $E_j$, whenever it is clear from the context that $j$ is fixed.

To embed the stars belonging to $\cE$, first take the star $E_1 = \{ v_{i}, w_{i}, u\}$, $1\leq i\leq c$.
Referring to Figure~\ref{embedreminder}, recall how the vertices of a star are labeled.
By~(\ref{sawing6}) of Lemma~\ref{lemma:sawing},
$$w_{1}, w_{2},\ldots, w_{c}, u \in I, \text{ and }$$
$$v_1, \ldots, v_c \in V(P),$$
so the vertices $v_i$ already have an image in $S$ by $f$.
Specifically, all $f(v_i)$ are in $\tS$.  
By Lemma~\ref{lemma:stars}, for the $c$-tuple of vertices $f(v_{1}), \ldots, f(v_{c})$, there are at least $s(c)=\frac{m}{c^2+1}$ stars of the form 
$$S_l = \{ \{f(v_i),w_i^{(l)},u^{(l)}\},\hspace*{0.15cm} 1 \leq i \leq c\}, \hspace*{0.5cm} l=1,\ldots,s(c)$$
in $S$ such that the sets
$$W_l = \{w_1^{(l)},\ldots,w_c^{(l)}, u^{(l)}\}, \hspace*{0.5cm} l=1,\ldots,s(c)$$
are pairwise disjoint subsets of $V(S)$.
Further, Lemma~\ref{lemma:random} guarantees that at least $r(c)=\frac{\epsilon^{c+1}m}{2(c^2+1)}$ of the sets $W_l$ lie in the reservoir $R$.
As an example, Figure~\ref{embedding1} shows just two such sets $W_1$ and $W_2$, where a dashed line segment represents a hyperedge in $S$.

\begin{figure}[h]
    \begin{minipage}{0.33\textwidth}
    	\centering
	\definecolor{zzttqq}{rgb}{0,0,0} 
\definecolor{qqqqff}{rgb}{0,0,0} 
\begin{tikzpicture}[line cap=round,line join=round,>=triangle 45,x=1.0cm,y=0.7cm]
\fill[line width=2.pt,dash pattern=on 4pt off 4pt,color=zzttqq,fill opacity=0] (6.,1.) -- (4.2,-1.) -- (4.8,-1.) -- cycle;
\fill[line width=2.pt,dash pattern=on 4pt off 4pt,color=zzttqq,fill opacity=0] (6.,1.) -- (5.7,-1.) -- (6.3,-1.) -- cycle;
\fill[line width=2.pt,dash pattern=on 4pt off 4pt,color=zzttqq,fill opacity=0] (6.,1.) -- (7.2,-1.) -- (7.8,-1.) -- cycle;
\fill[line width=2.pt,dash pattern=on 4pt off 4pt,color=zzttqq,fill opacity=0] (6.,1.) -- (5.4,1.) -- (4.1,3.) -- cycle;
\fill[line width=2.pt,color=zzttqq,fill=zzttqq,fill opacity=0.10000000149011612] (4.1,3.) -- (4.3641429644735465,1.0254659940610364) -- (3.806023398089517,1.0254659940610364) -- cycle;
\fill[line width=2.pt,color=zzttqq,fill=zzttqq,fill opacity=0.10000000149011612] (4.1,3.) -- (2.832891846445569,1.0254659940610364) -- (2.3168531886162764,1.0216671244288105) -- cycle;
\fill[line width=2.pt,color=zzttqq,fill=zzttqq,fill opacity=0.10000000149011612] (4.1,3.) -- (3.509291958589649,3.0008696189206994) -- (3.8,5.) -- cycle;
\fill[line width=2.pt,color=zzttqq,fill=zzttqq,fill opacity=0.10000000149011612] (4.8,-1.) -- (3.8657942712621005,-2.9859105973373743) -- (4.41898751506418,-2.998203780532976) -- cycle;
\fill[line width=2.pt,color=zzttqq,fill=zzttqq,fill opacity=0.10000000149011612] (4.8,-1.) -- (5.,-3.) -- (5.6114262850375525,-3.010496963728578) -- cycle;
\fill[line width=2.pt,color=zzttqq,fill=zzttqq,fill opacity=0.10000000149011612] (7.8,-1.) -- (7.,-3.) -- (7.60292196272504,-2.998203780532976) -- cycle;
\fill[line width=2.pt,color=zzttqq,fill=zzttqq,fill opacity=0.10000000149011612] (7.8,-1.) -- (8.15611520652712,-3.010496963728578) -- (8.758481183111607,-3.010496963728578) -- cycle;
\draw [line width=2.pt,dash pattern=on 4pt off 4pt,color=zzttqq] (6.,1.)-- (4.2,-1.);
\draw [line width=2.pt,dash pattern=on 4pt off 4pt,color=zzttqq] (4.2,-1.)-- (4.8,-1.);
\draw [line width=2.pt,dash pattern=on 4pt off 4pt,color=zzttqq] (4.8,-1.)-- (6.,1.);
\draw [line width=2.pt,dash pattern=on 4pt off 4pt,color=zzttqq] (6.,1.)-- (5.7,-1.);
\draw [line width=2.pt,dash pattern=on 4pt off 4pt,color=zzttqq] (5.7,-1.)-- (6.3,-1.);
\draw [line width=2.pt,dash pattern=on 4pt off 4pt,color=zzttqq] (6.3,-1.)-- (6.,1.);
\draw [line width=2.pt,dash pattern=on 4pt off 4pt,color=zzttqq] (6.,1.)-- (7.2,-1.);
\draw [line width=2.pt,dash pattern=on 4pt off 4pt,color=zzttqq] (7.2,-1.)-- (7.8,-1.);
\draw [line width=2.pt,dash pattern=on 4pt off 4pt,color=zzttqq] (7.8,-1.)-- (6.,1.);
\draw [line width=2.pt,dash pattern=on 4pt off 4pt,color=zzttqq] (6.,1.)-- (5.4,1.);
\draw [line width=2.pt,dash pattern=on 4pt off 4pt,color=zzttqq] (5.4,1.)-- (4.1,3.);
\draw [line width=2.pt,dash pattern=on 4pt off 4pt,color=zzttqq] (4.1,3.)-- (6.,1.);
\draw [line width=2.pt,color=zzttqq] (4.1,3.)-- (4.3641429644735465,1.0254659940610364);
\draw [line width=2.pt,color=zzttqq] (4.3641429644735465,1.0254659940610364)-- (3.806023398089517,1.0254659940610364);
\draw [line width=2.pt,color=zzttqq] (3.806023398089517,1.0254659940610364)-- (4.1,3.);
\draw [line width=2.pt,color=zzttqq] (4.1,3.)-- (2.832891846445569,1.0254659940610364);
\draw [line width=2.pt,color=zzttqq] (2.832891846445569,1.0254659940610364)-- (2.3168531886162764,1.0216671244288105);
\draw [line width=2.pt,color=zzttqq] (2.3168531886162764,1.0216671244288105)-- (4.1,3.);
\draw [line width=2.pt,color=zzttqq] (4.1,3.)-- (3.509291958589649,3.0008696189206994);
\draw [line width=2.pt,color=zzttqq] (3.509291958589649,3.0008696189206994)-- (3.8,5.);
\draw [line width=2.pt,color=zzttqq] (3.8,5.)-- (4.1,3.);
\draw [line width=2.pt,color=zzttqq] (4.8,-1.)-- (3.8657942712621005,-2.9859105973373743);
\draw [line width=2.pt,color=zzttqq] (3.8657942712621005,-2.9859105973373743)-- (4.41898751506418,-2.998203780532976);
\draw [line width=2.pt,color=zzttqq] (4.41898751506418,-2.998203780532976)-- (4.8,-1.);
\draw [line width=2.pt,color=zzttqq] (4.8,-1.)-- (5.,-3.);
\draw [line width=2.pt,color=zzttqq] (5.,-3.)-- (5.6114262850375525,-3.010496963728578);
\draw [line width=2.pt,color=zzttqq] (5.6114262850375525,-3.010496963728578)-- (4.8,-1.);
\draw [line width=2.pt,color=zzttqq] (7.8,-1.)-- (7.,-3.);
\draw [line width=2.pt,color=zzttqq] (7.,-3.)-- (7.60292196272504,-2.998203780532976);
\draw [line width=2.pt,color=zzttqq] (7.60292196272504,-2.998203780532976)-- (7.8,-1.);
\draw [line width=2.pt,color=zzttqq] (7.8,-1.)-- (8.15611520652712,-3.010496963728578);
\draw [line width=2.pt,color=zzttqq] (8.15611520652712,-3.010496963728578)-- (8.758481183111607,-3.010496963728578);
\draw [line width=2.pt,color=zzttqq] (8.758481183111607,-3.010496963728578)-- (7.8,-1.);
\begin{scriptsize}
\draw [fill=qqqqff] (6.,1.) circle (2.5pt);
\draw[color=qqqqff] (6.306823073590519,1.062826693367364) node {u};
\draw [fill=qqqqff] (4.2,-1.) circle (2.5pt);
\draw[color=qqqqff] (3.7011987775393953,-0.9356618443805933) node {$w_{2}$};
\draw [fill=qqqqff] (4.8,-1.) circle (2.5pt);
\draw[color=qqqqff] (5.117848880246802,-0.9019320800304167) node {$v_{2}$};
\draw [fill=qqqqff] (5.7,-1.) circle (2.5pt);
\draw[color=qqqqff] (5.573200698974183,-1.3) node {$w_{3}$};
\draw [fill=qqqqff] (6.3,-1.) circle (2.5pt);
\draw[color=qqqqff] (6.332120396853152,-1.3) node {$v_{3}$};
\draw [fill=qqqqff] (7.2,-1.) circle (2.5pt);
\draw[color=qqqqff] (7.057310330381943,-1.3) node {$w_{4}$};
\draw [fill=qqqqff] (7.8,-1.) circle (2.5pt);
\draw[color=qqqqff] (8.153527671762674,-0.9356618443805933) node {$v_{4}$};
\draw [fill=qqqqff] (5.4,1.) circle (2.5pt);
\draw[color=qqqqff] (4.94920005849592,1.05439425227982) node {$w_{1}$};
\draw [fill=qqqqff] (4.1,3.) circle (2.5pt);
\draw[color=qqqqff] (4.392658946718011,3.162504524165851) node {$v_{1}$};
\draw [fill=qqqqff] (3.8,5.) circle (2.5pt);
\end{scriptsize}
\end{tikzpicture}
    	\caption{The dashed edges of star $E_j$ are removed during decomposition \label{embedreminder}}
    	\end{minipage}
    	\begin{minipage}{0.59\textwidth}
    		\centering
\definecolor{zzttqq}{rgb}{0,0,0} 
\definecolor{qqqqff}{rgb}{0,0,0} 
\begin{tikzpicture}[line cap=round,line join=round,>=triangle 45,x=1.0cm,y=1.0cm]
\clip(0.34079231360926787,-1.2) rectangle (23.100787761327332,9.91380104385483);
\fill[line width=2.pt,color=zzttqq,fill=zzttqq,fill opacity=0.10000000149011612] (4.1,3.) -- (4.3641429644735465,1.0254659940610364) -- (3.806023398089517,1.0254659940610364) -- cycle;
\fill[line width=2.pt,color=zzttqq,fill=zzttqq,fill opacity=0.10000000149011612] (4.1,3.) -- (2.832891846445569,1.0254659940610364) -- (2.3168531886162764,1.0216671244288105) -- cycle;
\fill[line width=2.pt,color=zzttqq,fill=zzttqq,fill opacity=0.10000000149011612] (4.1,3.) -- (3.509291958589649,3.0008696189206994) -- (3.8,5.) -- cycle;
\fill[line width=2.pt,color=zzttqq,fill=zzttqq,fill opacity=0.10000000149011612] (6.416661752659802,2.906503383945089) -- (5.155954180533691,1.0119923313931234) -- (5.763617847966775,1.011083270323857) -- cycle;
\fill[line width=2.pt,color=zzttqq,fill=zzttqq,fill opacity=0.10000000149011612] (6.416661752659802,2.906503383945089) -- (7.133417257810685,0.9632995699804647) -- (6.528157053461051,0.9632995699804647) -- cycle;
\fill[line width=2.pt,color=zzttqq,fill=zzttqq,fill opacity=0.10000000149011612] (9.745592876582792,2.8587196836016964) -- (8.996981571202982,0.8836600694081441) -- (9.634097575781544,0.8995879695226082) -- cycle;
\fill[line width=2.pt,color=zzttqq,fill=zzttqq,fill opacity=0.10000000149011612] (9.745592876582792,2.8587196836016964) -- (10.271213580360108,0.8677321692936799) -- (10.860545884595279,0.8518042691792158) -- cycle;
\draw [line width=2.pt,color=zzttqq] (4.1,3.)-- (4.3641429644735465,1.0254659940610364);
\draw [line width=2.pt,color=zzttqq] (4.3641429644735465,1.0254659940610364)-- (3.806023398089517,1.0254659940610364);
\draw [line width=2.pt,color=zzttqq] (3.806023398089517,1.0254659940610364)-- (4.1,3.);
\draw [line width=2.pt,color=zzttqq] (4.1,3.)-- (2.832891846445569,1.0254659940610364);
\draw [line width=2.pt,color=zzttqq] (2.832891846445569,1.0254659940610364)-- (2.3168531886162764,1.0216671244288105);
\draw [line width=2.pt,color=zzttqq] (2.3168531886162764,1.0216671244288105)-- (4.1,3.);
\draw [line width=2.pt,color=zzttqq] (4.1,3.)-- (3.509291958589649,3.0008696189206994);
\draw [line width=2.pt,color=zzttqq] (3.509291958589649,3.0008696189206994)-- (3.8,5.);
\draw [line width=2.pt,color=zzttqq] (3.8,5.)-- (4.1,3.);
\draw [line width=2.pt,color=zzttqq] (6.416661752659802,2.906503383945089)-- (5.155954180533691,1.0119923313931234);
\draw [line width=2.pt,color=zzttqq] (5.155954180533691,1.0119923313931234)-- (5.763617847966775,1.011083270323857);
\draw [line width=2.pt,color=zzttqq] (5.763617847966775,1.011083270323857)-- (6.416661752659802,2.906503383945089);
\draw [line width=2.pt,color=zzttqq] (6.416661752659802,2.906503383945089)-- (7.133417257810685,0.9632995699804647);
\draw [line width=2.pt,color=zzttqq] (7.133417257810685,0.9632995699804647)-- (6.528157053461051,0.9632995699804647);
\draw [line width=2.pt,color=zzttqq] (6.528157053461051,0.9632995699804647)-- (6.416661752659802,2.906503383945089);
\draw [line width=2.pt,color=zzttqq] (9.745592876582792,2.8587196836016964)-- (8.996981571202982,0.8836600694081441);
\draw [line width=2.pt,color=zzttqq] (8.996981571202982,0.8836600694081441)-- (9.634097575781544,0.8995879695226082);
\draw [line width=2.pt,color=zzttqq] (9.634097575781544,0.8995879695226082)-- (9.745592876582792,2.8587196836016964);
\draw [line width=2.pt,color=zzttqq] (9.745592876582792,2.8587196836016964)-- (10.271213580360108,0.8677321692936799);
\draw [line width=2.pt,color=zzttqq] (10.271213580360108,0.8677321692936799)-- (10.860545884595279,0.8518042691792158);
\draw [line width=2.pt,color=zzttqq] (10.860545884595279,0.8518042691792158)-- (9.745592876582792,2.8587196836016964);
\draw [line width=2.pt,dash pattern=on 3pt off 3pt,color=zzttqq] (4.1,3.)-- (4.771941869802165,6.288315043333342);
\draw [line width=2.pt,dash pattern=on 3pt off 3pt,color=zzttqq] (4.771941869802165,6.288315043333342)-- (4.973357758720024,7.597518321299435);
\draw [line width=2.pt,dash pattern=on 3pt off 3pt,color=zzttqq] (4.973357758720024,7.597518321299435)-- (5.304255290513649,6.2451544957080865);
\draw [line width=2.pt,dash pattern=on 3pt off 3pt,color=zzttqq] (5.304255290513649,6.2451544957080865)-- (6.416661752659802,2.906503383945089);
\draw [line width=2.pt,dash pattern=on 3pt off 3pt,color=zzttqq] (8.16873076525085,2.8905754838306246)-- (5.721473917557786,6.331475590958598);
\draw [line width=2.pt,dash pattern=on 3pt off 3pt,color=zzttqq] (5.721473917557786,6.331475590958598)-- (4.973357758720024,7.597518321299435);
\draw [line width=2.pt,dash pattern=on 3pt off 3pt,color=zzttqq] (4.973357758720024,7.597518321299435)-- (6.0811451477682485,6.51850463066804);
\draw [line width=2.pt,dash pattern=on 3pt off 3pt,color=zzttqq] (6.0811451477682485,6.51850463066804)-- (9.745592876582792,2.8587196836016964);
\draw [line width=2.pt] (5.2754815920968126,7.022044352962692) circle (1.2576772595439563cm);
\draw [line width=2.pt,dash pattern=on 3pt off 3pt,color=zzttqq] (4.1,3.)-- (8.46936211636572,6.676759971960644);
\draw [line width=2.pt,dash pattern=on 3pt off 3pt,color=zzttqq] (8.46936211636572,6.676759971960644)-- (9.577149505413944,7.554357773674179);
\draw [line width=2.pt,dash pattern=on 3pt off 3pt,color=zzttqq] (9.577149505413944,7.554357773674179)-- (8.800259648159344,6.389022987792273);
\draw [line width=2.pt,dash pattern=on 3pt off 3pt,color=zzttqq] (8.800259648159344,6.389022987792273)-- (6.416661752659802,2.906503383945089);
\draw [line width=2.pt,dash pattern=on 3pt off 3pt,color=zzttqq] (8.16873076525085,2.8905754838306246)-- (9.203091425995062,6.273928194124924);
\draw [line width=2.pt,dash pattern=on 3pt off 3pt,color=zzttqq] (9.203091425995062,6.273928194124924)-- (9.577149505413944,7.554357773674179);
\draw [line width=2.pt,dash pattern=on 3pt off 3pt,color=zzttqq] (9.577149505413944,7.554357773674179)-- (9.6203100530392,6.273928194124924);
\draw [line width=2.pt,dash pattern=on 3pt off 3pt,color=zzttqq] (9.6203100530392,6.273928194124924)-- (9.745592876582792,2.8587196836016964);
\draw [line width=2.pt] (9.174317727578226,6.964496956129016) circle (1.2700418438484615cm);
\draw [rotate around={-0.5541005153679632:(7.325607604296451,6.986077229941646)},line width=2.pt] (7.325607604296451,6.986077229941646) ellipse (4.1472177807621655cm and 1.8349814213155113cm);
\begin{scriptsize}
\draw [fill=qqqqff] (4.973357758720024,7.597518321299435) circle (2.5pt);
\draw [fill=qqqqff] (5.304255290513649,6.2451544957080865) circle (2.5pt);
\draw [fill=qqqqff] (6.416661752659802,2.906503383945089) circle (2.5pt);
\draw[color=qqqqff] (6.95,2.993726574605482) node {$f(v_{2})$};
\draw [fill=qqqqff] (5.721473917557786,6.331475590958598) circle (2.5pt);
\draw [fill=qqqqff] (8.16873076525085,2.8905754838306246) circle (2.5pt);
\draw[color=qqqqff] (8.620424033054112,2.936179177771807) node {$f(v_{3})$};
\draw [fill=qqqqff] (6.0811451477682485,6.51850463066804) circle (2.5pt);
\draw [fill=qqqqff] (9.745592876582792,2.8587196836016964) circle (2.5pt);
\draw[color=qqqqff] (10.174203747563311,2.921792328563389) node {$f(v_{4})$};
\draw [fill=qqqqff] (4.771941869802165,6.288315043333342) circle (2.5pt);
\draw [fill=qqqqff] (4.1,3.) circle (2.5pt);
\draw[color=qqqqff] (4.7,3.0368871222307376) node {$f(v_{1})$};
\draw[color=black] (4.4,7.352941884756318) node {$W_1$};
\draw [fill=qqqqff] (8.46936211636572,6.676759971960644) circle (2.5pt);
\draw [fill=qqqqff] (9.577149505413944,7.554357773674179) circle (2.5pt);
\draw [fill=qqqqff] (8.800259648159344,6.389022987792273) circle (2.5pt);
\draw [fill=qqqqff] (9.203091425995062,6.273928194124924) circle (2.5pt);
\draw [fill=qqqqff] (9.6203100530392,6.273928194124924) circle (2.5pt);
\draw[color=black] (8.3,7.396102432381575) node {$W_2$};
\draw[color=black] (6.440816377978711,8.611791190492946) node {R};
\draw[color=qqqqff] (9.65,7.820514484029924) node {$u^{(2)}$};
\draw[color=qqqqff] (5.448123782597834,7.66) node {$u^{(1)}$};
\end{scriptsize}
\end{tikzpicture}
\caption{The dashed edges of each star $S_l$ are where $E_j$ can be mapped to.  Note that all vertex sets $W_l$ are disjoint \label{embedding1}}
    	\end{minipage}  
    \end{figure}

Choose the star $S_1$ to be the image of $E_1$.
Specifically, extend the homomorphism $f$ by letting
$$f(u) = u^{(1)}, \text{ and }$$
$$f(w_i) = w_i^{(1)}, \hspace*{0.5cm} 1 \leq i \leq c.$$
Now the subhypertrees in $S$ containing $f(v_{1}),\ldots,f(v_{c})$ are connected by $f(E_1)$ in the same way they were connected originally in $T$.

Repeat the above procedure for each $E_j$.
Instead of mapping $E_j$ to $S_1$ each time, choose a star $S_l$ such that no vertex of $W_l$ is already in the image of $f$.
(We show below in Claim~\ref{enoughstars} that it is always possible to find such a star $S_l$.)
Extend $f$ so that it maps $E_j$ to $S_l$.
Now the subhypertrees in $S$ containing $f(v_{1}),\ldots,f(v_{c})$ are connected by $f(E_j)$ in the same way they were connected originally in $T$.

After this process has been completed for all $E_j\in \cE$, all stars of $\cE$ have an image in $S$ by $f$.
Consequently, by~(\ref{sawing6}) of Lemma~\ref{lemma:sawing}, 
all vertices of $I$ have also been embedded into $S$.
Then
$$f(T) \subset S,$$
completing the proof.

\end{proof}

\begin{claim}\label{enoughstars}
There exists some star $S_l$ in $S$ onto which to embed $E_j$, such that none of the vertices of $W_l$ has been used yet.
\end{claim}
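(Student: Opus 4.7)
The plan is to bound the number of candidate sets $W_l$ that contain a vertex already lying in the image of $f$, and to show that this count is strictly smaller than the number $r(c)$ of candidate sets $W_l \subset R$ guaranteed by Lemma~\ref{lemma:random}(4) applied to $f(v_1),\ldots,f(v_c)$. The first key observation is that, by construction (Lemma~\ref{lemma:matching}), the image of $P$ under $f$ lies entirely in $\tS = S[V(S)\setminus R]$, so none of those vertices lie in $R$, and hence none of them can appear in any set $W_l \subset R$. Consequently, the only vertices of $R$ already used at the moment we wish to embed $E_j$ are those that came from the previously embedded stars $E_1,\ldots,E_{j-1}$.

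Next, I would bound the number of such used vertices in $R$. Each previously embedded star $E_{j'}$ contributes at most $d+1$ vertices to $R$, namely its image center $u^{(l')}$ together with at most $d$ vertices $w_i^{(l')}$. Summing over $j'<j$ and using Lemma~\ref{lemma:sawing}(4) to bound $|\cE|\le l$, the total number of used vertices of $R$ is at most $(d+1)l$. Because the $s(c)$ sets $W_l$ produced by Lemma~\ref{lemma:stars} are pairwise disjoint, each used vertex of $R$ belongs to at most one $W_l$, so at most $(d+1)l$ of the candidate sets $W_l \subset R$ are ``spoiled.''

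It then remains to verify that $(d+1)l < r(c)$, which leaves at least one $W_l\subset R$ entirely unused; mapping $E_j$ to the corresponding star $S_l$ then completes the step. Parts~(\ref{sawing1}) and~(\ref{sawing3}) of Lemma~\ref{lemma:sawing} combine to give $l \leq |I| \leq \frac{2d^2}{k}n \leq \frac{2d^2}{k}m$, and the choice of $k \geq \frac{8d^5}{\epsilon^{d+1}}$ in~(\ref{sizek}) was tailored precisely so that, using $r(c) \geq r(d) = \frac{\epsilon^{d+1}m}{2(d^2+1)}$, one obtains
$$(d+1)l \;\leq\; \frac{2d^2(d+1)}{k}\,m \;<\; \frac{\epsilon^{d+1}m}{2(d^2+1)} \;\leq\; r(c),$$
the middle inequality reducing to the elementary bound $(d+1)(d^2+1) < 2d^3$ for $d\geq 2$. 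The only mild obstacle here is the numerical calibration of $k$, which is routine given the hierarchy~(\ref{eq:hier}); everything else follows directly from the disjointness of the $W_l$ and the fact that $f(V(P))$ is disjoint from $R$.
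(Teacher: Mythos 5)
Your proposal is correct and follows essentially the same route as the paper's own proof: bound the number of candidate sets $W_l\subset R$ that are ``spoiled'' by previously embedded stars (at most $d+1$ spoiled vertices per star, each hitting at most one $W_l$ by disjointness), and then check that this count is dominated by $r(c)$ via the calibration $k\geq 8d^5/\epsilon^{d+1}$. The only cosmetic differences are that you make explicit the (correct and clarifying) observation that $f(V(P))\subseteq V(\tS)$ is disjoint from $R$ so only the previously placed stars can interfere, and that you use the looser bound $(d+1)l$ in place of the paper's $(d+1)(j-1)$, which lands on the same inequality $(d+1)(d^2+1)\leq 2d^3$ and the same requirement on $k$.
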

\begin{proof}[Proof of Claim~\ref{enoughstars}]
By Lemma~\ref{lemma:stars}, there are $r(c)$ stars $S_{r}$ in $S$ onto which to embed $E_j$.
If for any $i<j$,
$$f(V(E_i)) \cap W_r \neq \emptyset,$$
then $S_r$ cannot be the image of $E_j$.
$$\vert f(V(E_i)) \cap R \vert \leq d+1,$$
so there are at most $(d+1)(j-1)$ stars $S_r$ that cannot be the image of $E_j$, because one of their vertices is already used in the image of some $E_i$.

Still, there were originally $r(c) = \frac{\epsilon^{c_j+1}m}{2(c_j^2+1)}$ stars $S_{r}$ from which to choose.
Recall that $d\geq c$, $\vert \cE \vert \geq j$, and $m \geq n$. 
Also, combining several parts of Lemma~\ref{lemma:sawing},
$$ \left( \frac{2d^2}{k}\right) n \geq \vert \cE \vert.$$
Using these facts, Inequality~\ref{eq:hier}, and~\ref{sizek},
$$r(c) = \frac{\epsilon^{c_j+1}m}{2(c_j^2+1)} \geq \frac{\epsilon^{d+1}}{2(d^2+1)}n \geq  (d+1)\frac{2d^2}{k}n > (d+1)(\vert \cE \vert-1) \geq (d+1)(j-1).$$
So there will always be at least one set $W_j$ such that for all $i<j$,
$$f(V(E_i)) \cap W_l = \emptyset.$$
Map $E_j$ onto the star $S_j$ containing $W_j$.
\end{proof}

\section{Concluding Remarks}
Note that problems of finding large matchings in Steiner systems has been extensively studied.
For some of these results, see Chapter 19 of ~\cite{CR}.
The best current bound on the size of such a matching	is due to Alon, Kim, and Spencer~\cite{AKS}, who proved a more general result implying that any Steiner triple system on $m$ vertices contains a matching of size $(m/3) - cm^{1/2}(\ln m)^{3/2}$ where $c>0$ is an absolute constant.
Likely one could extend Theorem~\ref{thm:main} along similar lines giving a better numerical bound on parameter $\mu$ as a function of $m$.

As another possible extension one could study a problem of embedding hypertrees into other designs.
In our opinion the most interesting question is whether Conjecture~\ref{conjec} is true.
In other words, does any Steiner triple system on $m$ vertices contain all hypertrees with $m-o(m)$ vertices?
Note that the main evidence for stating Conjecture~\ref{conjec} is our inability to find a counterexample.
We close with the following likely easier variant of Conjecture~\ref{conjec}
\begin{conj}\label{conjec2}
Let $d$ be a fixed constant.
Then any Steiner triple system on $n $ vertices contains all hypertrees with maximum degree $\leq d$ and $n-o(n)$ vertices.
\end{conj}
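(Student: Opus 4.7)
The plan is to adapt the proof of Theorem~\ref{thm:main} along two independent axes simultaneously: drop the subdivision hypothesis at the decomposition stage, and tighten the $(1+\mu)n$ ratio to $n-o(n)$ via an absorption argument. First, I would reformulate Lemma~\ref{lemma:sawing} for general bounded-degree hypertrees. The obstruction is that without celibate vertices one cannot isolate the $w_{j,i}$'s when sawing around a red vertex $u$, since they may have degree greater than one. The fix would be to saw around a bounded-size local patch rather than a single star: identify a vertex $u$ all of whose sons have small progeny, then excise an $O_d(1)$-neighborhood $H_u$ around $u$ (a subhypertree of size bounded by a function of $d$). Because the tree has maximum degree $d$, one can still cover $T$ with such patches so that the total ``isolated'' mass is at most $O_d(n/k)$ and the leftover subtrees in $\cP$ have size at most $k$.

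Second, I would upgrade Lemma~\ref{lemma:stars} to a gadget-finding lemma: for every bounded configuration $H$ (drawn from a finite list depending only on $d$) with $c \leq d$ marked ``boundary'' vertices, and every anchor tuple $(v_1,\dots,v_c)\in V(S)^c$, there are $\Omega_d(m)$ pairwise vertex-disjoint copies of $H$ in $S$ whose marked boundary is exactly $(v_1,\dots,v_c)$. The proof follows the iterative forbidden-set template of Lemma~\ref{lemma:stars}: at each stage only $O_d(1)$ vertices must be forbidden from a pool of size $\sim m$, and the finite list of gadget shapes costs only a constant union-bound factor. A design requirement is that each $H$ contains strictly more interior vertices than boundary vertices, so that the placement has sufficient freedom.

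Third, to go from $(1+\mu)n$ to $n-o(n)$, I would augment the reservoir step with an absorption construction. Before the main matching step, I would carve from $R$ an absorbing subhypergraph $A\subset S$ of size $o(m)$ with the property that any bounded-degree hypertree on $o(m)$ vertices can be embedded into $A$ disjointly from a fixed partial embedding. The construction would follow the standard absorption template: build many vertex-disjoint ``absorbing gadgets,'' each with two alternative local hypertree completions, and locate $\Omega(m)$ copies of each gadget type via the Chernoff argument of Lemma~\ref{lemma:random}. The Frankl--R\"odl matching of Lemma~\ref{lemma:matching} then handles the bulk of $T$, and $A$ absorbs the residual $o(n)$ vertices.

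The principal obstacle is establishing the existence of the required absorbing gadgets with linear frequency in an \emph{arbitrary} Steiner triple system. Graph absorption typically relies on connectivity or quasirandomness of the host, neither of which is guaranteed by the Steiner axiom: all one knows is that every pair of vertices lies in exactly one triple. Producing local gadgets whose multiplicity is $\Omega(m)$ in \emph{any} STS on $m$ vertices, and whose combined swap-operation can absorb any bounded-degree residual hypertree, seems to require genuinely new ideas beyond the techniques of this paper; this is plausibly why Conjecture~\ref{conjec2} is left open.
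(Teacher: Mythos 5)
This statement is a conjecture, not a theorem: the paper explicitly presents it as open, offers no proof, and in fact describes it as a ``likely easier variant'' of the still-open Conjecture~\ref{conjec}. There is therefore no proof in the paper to compare against. Your proposal does not actually prove the statement either, and you say so explicitly at the end; your self-assessment is honest and, in my view, accurate.

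On the substance of your sketch, two remarks. First, your plan to generalize Lemma~\ref{lemma:sawing} by excising bounded-size patches in place of stars is plausible, but it introduces a reassembly difficulty you gloss over: in the subdivision-tree setting the removed patch is a star, so the reassembly problem reduces to hitting a $c$-tuple $f(v_1),\dots,f(v_c)$ of already-placed anchors with a common center $u^{(l)}$, and the Steiner axiom then hands you the third vertex of each triple for free. For a general bounded-degree patch $H_u$ the boundary is no longer a single layer of anchors joined to one center; the gadget lemma must simultaneously hit several anchors across several edges while avoiding unwanted extra triples induced among the new interior vertices, and the iterative forbidden-set bookkeeping has to handle internal edges of $H_u$ whose two forced endpoints determine the third vertex in a way that could collide with earlier choices. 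This is tractable in spirit, since each step only forbids $O_d(1)$ vertices, but it is a genuine extension and not a drop-in replacement. Second, note that dropping the subdivision hypothesis while retaining $(1+\mu)n$ slack is already not claimed by the paper, even for bounded degree; proving that intermediate statement first would isolate whether the decomposition/gadget issues or the absorption step is the actual binding obstacle. Your closing observation that absorption in an arbitrary Steiner triple system lacks the quasirandomness or connectivity leverage typically exploited in absorption arguments is exactly the right concern and is consistent with the authors leaving the conjecture open.
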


\begin{bibdiv}
\begin{biblist}

\bib{AKS}{article}{
    AUTHOR = {Alon, Noga},
	AUTHOR = {Kim, Jeong-Han},
    AUTHOR = {Spencer, Joel},
     TITLE = {Nearly perfect matchings in regular simple hypergraphs},
   JOURNAL = {Israel J. Math.},
  FJOURNAL = {Israel Journal of Mathematics},
    VOLUME = {100},
      YEAR = {1997},
     PAGES = {171--187},
      ISSN = {0021-2172},
   MRCLASS = {05C70 (05C65)},
  MRNUMBER = {1469109},
MRREVIEWER = {Italo Jos\'e Dejter},
}

\bib{Bol}{article}{
    AUTHOR = {Bollob\'as, B\'ela},
     TITLE = {Some remarks on packing trees},
   JOURNAL = {Discrete Math.},
  FJOURNAL = {Discrete Mathematics},
    VOLUME = {46},
      YEAR = {1983},
    NUMBER = {2},
     PAGES = {203--204},
      ISSN = {0012-365X},
   MRCLASS = {05C70 (05C05)},
  MRNUMBER = {710892},
MRREVIEWER = {H. Joseph Straight},
       URL = {https://doi.org/10.1016/0012-365X(83)90254-6},
}

MathSciNet
\bib{CR}{book}{
   author={Colbourn, Charles J.},
   author={Rosa, Alexander},
   title={Triple systems},
   series={Oxford Mathematical Monographs},
   publisher={The Clarendon Press, Oxford University Press, New York},
   date={1999},
   pages={xvi+560},
   isbn={0-19-853576-7},
   review={\MR{1843379}},
}

\bib{FR}{article}{
   AUTHOR = {Frankl, Peter},
   AUTHOR = {R\"odl, V.},
     TITLE = {Near perfect coverings in graphs and hypergraphs},
   JOURNAL = {European J. Combin.},
  FJOURNAL = {European Journal of Combinatorics},
    VOLUME = {6},
      YEAR = {1985},
    NUMBER = {4},
     PAGES = {317--326},
      ISSN = {0195-6698},
   MRCLASS = {05C70 (05B40)},
  MRNUMBER = {829351},
MRREVIEWER = {Zolt\'an F\"uredi},
}

\bib{GL}{book}{
    AUTHOR = {Gy\'arf\'as, A.},
    AUTHOR = {Lehel, J.},
     TITLE = {Packing trees of different order into {$K_{n}$}},
 BOOKTITLE = {Combinatorics ({P}roc. {F}ifth {H}ungarian {C}olloq.,
              {K}eszthely, 1976), {V}ol. {I}},
    SERIES = {Colloq. Math. Soc. J\'anos Bolyai},
    VOLUME = {18},
     PAGES = {463--469},
 PUBLISHER = {North-Holland, Amsterdam-New York},
      YEAR = {1978},
   MRCLASS = {05C38 (05C05)},
  MRNUMBER = {519284},
MRREVIEWER = {K. C. Stacey},
}

\bib{KK}{article}{
    AUTHOR = {Kahn, Jeff},
    AUTHOR = {Kayll, P. Mark},
     TITLE = {Fractional v.\ integral covers in hypergraphs of bounded edge
              size},
   JOURNAL = {J. Combin. Theory Ser. A},
  FJOURNAL = {Journal of Combinatorial Theory. Series A},
    VOLUME = {78},
      YEAR = {1997},
    NUMBER = {2},
     PAGES = {199--235},
      ISSN = {0097-3165},
   MRCLASS = {05C65 (05C70)},
  MRNUMBER = {1445415},
MRREVIEWER = {Nigel Martin},
}

\bib{JLR}{book}{
   AUTHOR = {Janson, Svante},
   AUTHOR = {\L uczak, Tomasz},
   AUTHOR = {Ruci\'nski, Andrzej},
     TITLE = {Random graphs},
    SERIES = {Wiley-Interscience Series in Discrete Mathematics and
              Optimization},
 PUBLISHER = {Wiley-Interscience, New York},
      YEAR = {2000},
     PAGES = {xii+333},
      ISBN = {0-471-17541-2},
   MRCLASS = {05C80 (60C05 82B41)},
  MRNUMBER = {1782847},
MRREVIEWER = {Mark R. Jerrum},
}

\bib{LPV}{book}{
    AUTHOR = {Lov\'asz, L.},
    AUTHOR = {Pelik\'an, J.},
    AUTHOR = {Vesztergombi, K.},
     TITLE = {Discrete mathematics},
    SERIES = {Undergraduate Texts in Mathematics},
      NOTE = {Section 8.5},
 PUBLISHER = {Springer-Verlag, New York},
      YEAR = {2003},
     PAGES = {x+290},
      ISBN = {0-387-95584-4},
   MRCLASS = {05-01 (11-01)},
  MRNUMBER = {1952453},
MRREVIEWER = {Robin J. Wilson},
       URL = {https://doi.org/10.1007/b97469},
}

\bib{PS}{article}{
    AUTHOR = {Pippenger, Nicholas},
    AUTHOR = {Spencer, Joel},
     TITLE = {Asymptotic behavior of the chromatic index for hypergraphs},
   JOURNAL = {J. Combin. Theory Ser. A},
  FJOURNAL = {Journal of Combinatorial Theory. Series A},
    VOLUME = {51},
      YEAR = {1989},
    NUMBER = {1},
     PAGES = {24--42},
      ISSN = {0097-3165},
   MRCLASS = {05C65 (05C15 05C70)},
  MRNUMBER = {993646},
}

\bib{Pol}{book}{
  AUTHOR = {P\'olya, G.},
  AUTHOR = {Read, R. C.},
     TITLE = {Combinatorial enumeration of groups, graphs, and chemical
              compounds},
      NOTE = {P\'olya's contribution translated from the German by Dorothee
              Aeppli},
 PUBLISHER = {Springer-Verlag, New York},
      YEAR = {1987},
     PAGES = {viii+148},
      ISBN = {0-387-96413-4},
   MRCLASS = {05A15 (01A75 05C30 20B05 92A40)},
  MRNUMBER = {884155},
MRREVIEWER = {Daniel Turz\'\i k},
}

\end{biblist}
\end{bibdiv}

\end{document}